\numberwithin{equation}{section}
\newtheorem{theorem}{Theorem}[section]
\newtheorem{lemma}[theorem]{Lemma}
\newtheorem{proposition}[theorem]{Proposition}
\newtheorem{remark}[theorem]{Remark}
\newtheorem{assumption}[theorem]{Assumption}
\newcommand{\beqn}{\begin{equation}}
\newcommand{\eeqn}{\end{equation}}
\newcommand{\bear}{\begin{eqnarray}}
\newcommand{\eear}{\end{eqnarray}}
\newcommand{\bean}{\begin{eqnarray*}}
\newcommand{\eean}{\end{eqnarray*}}
\def\varep{\varepsilon}
\def\f12{{\frac 1 2}}
\def\f{\widetilde{f}}
\newcommand{\bea}{\begin{eqnarray}}
\newcommand{\eea}{\end{eqnarray}}
\def\beaa{\begin{eqnarray*}}
\def\eeaa{\end{eqnarray*}}
\def\ba{\begin{array}}
\def\ea{\end{array}}
\def\r_h{{r_{\mathcal{H}}}}
\begin{document}
\title{Optimal local well-posedness theory for the kinetic wave equation}

\author{Pierre Germain \footnotemark[1] \and Alexandru D. Ionescu\footnotemark[2]   \and Minh-Binh Tran\footnotemark[3] 
}

\renewcommand{\thefootnote}{\fnsymbol{footnote}}
\footnotetext[1]{Courant Institute of Mathematical Sciences, 251 Mercer Street, New York, NY 10012, USA. \\Email: pgermain@cims.nyu.edu
}

\footnotetext[2]{Department of Mathematics, Fine Hall, Washington Road, Princeton, NJ 08544, USA.\\Email:  aionescu@math.princeton.edu
}

\footnotetext[3]{Department of Mathematics, University of Wisconsin-Madison, Madison, WI 53706, USA. \\Email: mtran23@wisc.edu
}

\maketitle
\begin{abstract} We prove local existence and uniqueness results for the (space-homogeneous) 4-wave kinetic equation in wave turbulence theory. We consider collision operators defined by radial, but general dispersion relations satisfying suitable bounds, and we prove two local well-posedness theorems in nearly critical weighted spaces. 
 \end{abstract}

{\bf Keyword:}  wave (weak) turbulence, quantum Boltzmann, nonlinear Schr\"odinger, wave kinetic equations

{\bf MSC:} {35BXX ; 37K05 ; 35Q55 ; 35C07; 45G05 ; 35Q20 ; 35B40 ; 35D30}

\setcounter{tocdepth}{1}

\tableofcontents
\section{Introduction}\label{Sec:Intro}

\subsection{Weak turbulence}

Weak turbulence refers to the theory descirbing nonequilibrium statistical mechanics of weakly nonlinear Hamiltonian systems; it is a universal phenomenon arising in a number of physical systems. 
For these systems, it is expected that the nonlinear effects lead to the stochastization of waves phases and a slow modulation of the amplitudes, and that a kinetic equation of quantum Boltzmann type for the mean square amplitudes can be written. There are two common types of such kinetic equations: the 3-wave and the 4-wave ones. The first derivation of a kinetic model of weak turbulence, which is a 3-wave one, was obtained, to our knowledge, in \cite{Peierls:1993:BRK,Peierls:1960:QTS} in the study of phonon interactions in anharmonic crystal lattices. We refer to  \cite{zakharov2012kolmogorov,Nazarenko:2011:WT,zakharov1998nonlinear,fitzmaurice1993nonlinear,newell2011wave,newell2013wave} for detailed discussions on the topics.

4-wave kinetic equations play an important role in the theory of weak turbulence and appear in several contexts: gravity and capillary waves on the surface of a finite-depth fluid \cite{zakharov1999statistical,Hasselmann:1962:OTN1,Hasselmann:1963:OTN2,hasselmann1966feynman,dyachenko1994free}, Alfven wave turbulence in astrophysical plasmas \cite{ng1996interaction}, optical waves of diffraction in nonlinear media \cite{DyachenkoNewellPushkarevZakharov:1992:OTW,lvov1998quantum,lvov2000finite}, quantum fluids \cite{kolmakov2014wave}, Langmuir waves  \cite{zakharov1972collapse} to name only a few.  

\subsection{The kinetic wave equation and its first properties}

The present article investigates the local well-posedness theory for the space-homogeneous 4-wave kinetic equation
\begin{equation}
\label{4wave}
\begin{aligned}
\partial_t f(t,p) \ =  & \ \mathcal{Q}[f](t,p), \mbox{ on } \mathbb{R}_+\times\mathbb{R}^3,\\
f(0,p) \  =  &f_0(p) \mbox{ on } \mathbb{R}^3.
\end{aligned}
\end{equation}
The trilinear operator $\mathcal{Q}$ is given by
$$
\mathcal{Q}[f](p) =   \iiint_{\mathbb{R}^{3\times3}}\delta(p+p_1-p_2-p_3)\delta(\omega + \omega_1 -\omega_2 - \omega_3) [f_2 f_3 (f_1+f)
-f f_1 (f_2 + f_3)]\,dp_1\,dp_2\,dp_3,
$$
where we denoted
$$
\omega= \omega(p), \qquad \omega_i = \omega(p_i), \qquad f = f(p), \qquad f_i = f(p_i).
$$
In the above, $p \mapsto \omega(p)$ is the dispersive relation of the underlying dispersive problem, to which we will come back shortly.

Notice that the nonlinear term can also be written
\begin{equation*}
\begin{aligned}
\mathcal{Q}[f](p)\ =  & \  \iiint_{\mathbb{R}^{3\times3}}\delta(p+p_1-p_2-p_3)\delta(\omega+\omega_1-\omega_2-\omega_3) 
\\ & \qquad\qquad\qquad\qquad\qquad\qquad \times f f_1 f_2 f_3 \Big[ \frac{1}{f} + \frac{1}{f_1} - \frac{1}{f_2} - \frac{1}{f_3} \Big] \,dp_1\,dp_2\,dp_3.
\end{aligned}
\end{equation*}
Writing the nonlinear term in this way makes it clear that the mass, momentum, and energy are formally conserved; they are defined respectively as
$$
\int_{\mathbb{R}^3} f(p)\,dp, \qquad \int_{\mathbb{R}^3}  p f(p)\,dp, \qquad \int_{\mathbb{R}^3}  \omega(p) f(p)\,dp.
$$
Furthermore, the entropy, defined by
\begin{equation*}
\int_{\mathbb{R}^3}  \log f(p) \,dp,
\end{equation*}
is formally decreasing. Finally, the above form of the nonlinear term leads to the stationary solutions
\begin{equation}
\label{stationary}
\frac{1}{\mu + \nu \cdot p + \xi \omega(p)},
\end{equation}
where $(\mu,\nu,\xi) \in \mathbb{R} \times \mathbb{R}^3 \times \mathbb{R}$ are such that $\mu + \nu \cdot p + \xi \omega(p) > 0$ for any $p$.

The equation~\eqref{4wave} does not admit invariant scalings for general dispersion relations $\omega(p)$. However, for $\omega(p) = |p|^2$, a number of scalings arises, which leave the set of solutions invariant. The most relevant one leaves the time variable untouched: it is given by the transformation
\begin{equation}
\label{scaling}
f(t,p) \mapsto \lambda^2 f(t, \lambda p).
\end{equation}

\subsection{The dispersion relation}

One of our aims is to allow more general dispersion relations which enjoy similar bounds to $\omega(p) = |p|^2$. This is motivated by the following instances of physical interest:
\begin{itemize}
\item The basic example is the Schr\"odinger case
\begin{equation}\label{Schr}
\omega(p) = |p|^2.
\end{equation}
\item The  Bogoliubov dispersion law \cite{E,newell2013wave} 
\begin{equation}\label{Bogoliubov0}
\vartheta(p)=\sqrt{\theta_1|p|^2+\theta_2|p|^4},
\end{equation}
where $\theta_1,\theta_2$ are strictly positive  constants.  
\item The modified Bogoliubov dispersion law \cite{E} and the Bohm-Pines dispersion law \cite{bohm1951collective}
\begin{equation}\label{Bogoliubov}
\vartheta(p)=\sqrt{\theta_0+\theta_1|p|^2+\theta_2|p|^4},
\end{equation}
where $\theta_0,\theta_1,\theta_2$ are strictly positive  constants. In the very low temperature regime \cite{EPV,IG,bohm1951collective}, $\vartheta$ can be replaced by the following approximated dispersion relation
\begin{equation}\label{BogoliubovLow}
\omega(p)={\lambda_0+\lambda_1|p|^2+\lambda_2|p|^4},
\end{equation}
 with  $\lambda_0,\lambda_1,\lambda_2$ being strictly positive constants depending on $\theta_0,\theta_1,\theta_2$. 
\end{itemize}

These examples are captured by the following general assumption.

\begin{assumption}\label{Assump1}

The dispersion relation is of the form
\begin{equation}\label{Dispersion}
\omega(p) \  = \ \Omega(|p|),
\end{equation}
and satisfies:
\begin{equation}\label{Omega}
\begin{aligned}
& \mbox{(i)          $\Omega(0) = 0$ (this is simply a convenient normalization).} \\
& \mbox{(ii)          $\Omega\in C^{1}(\mathbb{R}_+)$ and $\Omega(x)\ge 0$ for all $x$ in $\mathbb{R}_+$. }\\
& \mbox{(iii)          There exists a constant $c_1>0$ such that $\Omega'(x)\ge c_1x$, for all $x$ in $\mathbb{R}_+$. } \\
& \mbox{(iv)           There exists a constant $c_2>0$ such that $\Omega(x) \le \frac{1}{2} \Omega(c_2x)$, for all $x$ in $\mathbb{R}_+$.}
\end{aligned}
\end{equation}
\end{assumption}

\subsection{Rigorous results on the isotropic 4-wave kinetic equation and related models}

The first question is that of the derivation of this kinetic equation from Hamiltonian dynamics: it should arise in the weakly nonlinear, big box limit  under the random phase approximation. This is not the subject of this paper, but we refer to the classical textbooks~\cite{zakharov1998nonlinear, Nazarenko:2011:WT} for a heuristic discussion, as well as to~\cite{LukkarinenSpohn:2011:WNS} for the latest rigorous results.

The question of the local existence and uniqueness of solutions to \eqref{4wave} was first studied in \cite{EscobedoVelazquez:2015:OTT}, where the dispersion relation is of classical type $\omega(p)=|p|^2$, and the solution $f$ is radial (velocity-isotropic). Abusing notations by denoting $p$ for $|p|$ and $f(p)$ for $f(|p|)$, the equation \eqref{4wave} reduces to a one-dimensional Boltzmann equation
\begin{equation}\label{1D4wave}
\partial_t f=\int_{\mathbb{R}_+^2} \frac{p_2 p_3 \min\{p,p_1,p_2,p_3\}}{p} [f_2 f_3(f+f_1)- f f_1(f_2+f_3)]  \,dp_3 \, dp_4,
\end{equation}
where $p_1^2 = p_2^2+ p_3^2 - p^2$.

It is proved in~\cite{EscobedoVelazquez:2015:OTT} that the above equation admits global, measure valued, weak solutions. This functional framework allows in particular for condensation, namely the development of a point mass at the origin. It is furthermore showed that condensation can occur, and that, as $t \to \infty$, most of the energy is transfered to high frequencies. The articles \cite{kierkels2015transfer, kierkels2015self} are dedicated to a quadratic equation arising from \eqref{4wave} in the regime where a Dirac mass has formed, and contains most of the mass. Note that the existence and uniqueness of radial weak  solutions to  a slightly simplified version of the 4-wave kinetic equation for general power-law dispersion has been proved in \cite{merino2016isotropic}. 

The reduction to the radial model \eqref{1D4wave} is restricted to the case $\omega(p) = |p|^2$. It is therefore one of the goal of this paper to construct a local existence and uniqueness theory which would be valid in greater generality.

In the  theory of the classical Boltzmann equation, the conservation laws
\begin{equation}\label{ClassicalBoltzmann} p+p_1=p_2+p_3,  \ \ \ \ |p|^2+|p_1|^2=|p_2|^2+|p_3|^2 \end{equation}
play a very important role. Since \eqref{ClassicalBoltzmann} implies that $p$, $p_1,$ $p_2,$ $p_3$ are on the sphere centered at $\frac{p+p_1}{2}$ with radius $\frac{|p-p_1|}{2}$, the Boltzmann collision operators can be considered as integrals on spheres (see, for instance \cite{MR1942465,Cercignani:1988:BEI}) and the Carleman representation  \cite{Carleman:1933:TEI} can be used. This is not the case for more general dispersion relations, for which the resonant manifolds do not admit such simple parameterizations.

Let us mention that  \eqref{4wave} is very similar to the Boltzmann-Nordheim (Uehling-Ulenbeck) equation (cf. \cite{Nordheim:OTK:1928,UehlingUhlenbeck:TPI:1933}), which describes the evolution of the density function of a dilute Bose gas at high temperature (above the Bose-Einstein condensate transition temperature)
\begin{equation}
\label{QB}
\begin{aligned}
\partial_t f(t,p) \ =  & \ \mathcal{Q}[f](t,p) + \mathcal{Q}_0[f](t,p), \\
\mathcal{Q}_0[f](t,p)\ =  & \  \iiint_{\mathbb{R}^{3\times3}}\delta(p+p_1-p_2-p_3)\delta(\omega+\omega_1-\omega_2-\omega_3)[f_2f_3-  
ff_1]dp_1dp_2dp_3, \\
f(0,p) \  =  &f_0(p).
\end{aligned}
\end{equation}
Notice that $\mathcal{Q}_0$ is the classical Boltzmann collision operator. The study of \eqref{QB} is also a subject of rapidly growing interest in the kinetic community (cf. \cite{arkeryd2016cauchy,EscobedoVelazquez:2015:OTT,EscobedoVelazquez:2015:FTB,SofferBinh1,SofferBinh2,ToanBinh,MR1837939,Lu:2004:OID,Lu:2005:TBE,Lu:2013:TBE,BriantEinav:2016:OTC,JinBinh,saint2004kinetic,li2017global,reichl2017kinetic} and the references therein).  Thanks to the stabilization effect of the classical Boltzmann collision operator $\mathcal{Q}_0$, the classical method of moment production developed for the classical  Boltzmann equation can be applied (cf. \cite{BriantEinav:2016:OTC,li2017global}) to study the well-posedness of the equation \eqref{QB}. However, this method cannot be used for the 4-wave kinetic equation  since $\mathcal{Q}_0$ is missing there.

Besides the 4-wave kinetic equation, the 3-wave kinetic equation also plays an important role in the theory of weak turbulence, and has been studied in \cite{EscobedoBinh,AlonsoGambaBinh,GambaSmithBinh,CraciunBinh,EPV} for the phonon interactions in anharmonic crystal lattices, in \cite{GambaSmithBinh} for stratified  flows in the ocean, and in \cite{nguyen2017quantum} for capillary waves.

Finally, let us mention the (CR) equation,  which is derived in~\cite{faou2016weakly,buckmaster2016effective} and studied in~\cite{germain2015continuous,buckmaster2016analysis,germain2016continuous}, which is a Hamiltonian equation whoses nonlinearity is given by the trilinear term $\mathcal{T}_1$ (defined below).

\section{Main results}

For the sake of simplicity, we impose the abbreviation $f=f(t,p)$, $f_1=f_1(t,p)$, $f_2=f_2(t,p)$, $f_3=f_3(t,p)$ and $\omega=\omega(p)$, $\omega_1=\omega(p_1)$, $\omega_2=\omega(p_2)$, $\omega_3=\omega(p_3)$.  

We consider the initial-value problems in $\mathbb{R}^3\times [0,T]$ of the 4-wave kinetic equation
\begin{equation}\label{4waveEquivalence}
\begin{split}
\partial_t f  &=  \mathcal{Q}[f]:= \mathcal{T}_1(f,f,f)+ \mathcal{T}_2(f,f,f)-2\mathcal{T}_3(f,f,f),\\
f(0)&=f_0,
\end{split}
\end{equation}
where
\begin{equation}\label{Operators}
\begin{aligned}
\mathcal{T}_1(f,g,h) \ : = &\  \int_{\mathbb{R}^9}\delta(p+p_1-p_2-p_3)\delta(\omega+\omega_1-\omega_2-\omega_3)\times \\
& \times f(p_1)g(p_2)h(p_3)\,dp_1dp_2dp_3,\\
\mathcal{T}_2(f,g,h) \ : = &\  \int_{\mathbb{R}^9}\delta(p+p_1-p_2-p_3)\delta(\omega+\omega_1-\omega_2-\omega_3)\times \\
& \times f(p)g(p_2)h(p_3)\,dp_1dp_2dp_3,\\
\mathcal{T}_3(f,g,h) \ : = &\  \int_{\mathbb{R}^9}\delta(p+p_1-p_2-p_3)\delta(\omega+\omega_1-\omega_2-\omega_3)\times \\
& \times f(p)g(p_1)h(p_2)\,dp_1dp_2dp_3.
\end{aligned}
\end{equation}

We define the function spaces $L^r_s$, $r\in[1,\infty]$, $s\geq 0$ by the norms
\begin{equation}\label{nor1} 
\|f\|_{L^r_s} := \ \| \langle x \rangle^s f\|_{L^r},\qquad \langle x\rangle:=(1+|x|^2)^{1/2}.
\end{equation}
In the case $r=\infty$ we require also that $f$ is continuous, so we define
\begin{equation*}
L^\infty_s:=\{f\in C^0(\mathbb{R}^3):\,\|f\|_{L^\infty_s}<\infty\}.
\end{equation*}
Our first main theorem concerns local well-posedness of the initial-value problem \eqref{4waveEquivalence} in $L^\infty_s$, $s>2$. More precisely:

\begin{theorem}\label{theorem:main} (i) Assume that $\omega$ satisfies Assumption \ref{Assump1} and $s>2$. Then the initial-value problem \eqref{4waveEquivalence} is locally well-posed in $L^\infty_s$ for $s>2$, in the sense that for any $R>0$ there is $T\gtrsim_s R^{-2}$ such that for any initial-data $f_0\in L^\infty_s$ with $\|f_0\|_{L^\infty_s}\leq R$, there is a unique solution $f$ in $C^1([0,T]:L^\infty_s)$ of the initial-value problem \eqref{4waveEquivalence}. Furthermore, $\|f(t)\|_{L^\infty_s}\leq 2R$ for any $t\in[0,T]$ and the map $f_0 \mapsto f$ is continuous from $L^\infty_s$ to $C^1([0,T]:L^\infty_s)$.

(ii) If furthermore $f_0\geq 0$, then $f(t)$ is non-negative for any $t\in[0,T]$.
\end{theorem}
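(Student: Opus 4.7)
The proof is a standard Picard/Duhamel fixed-point scheme whose entire analytic content is one trilinear estimate: for $s>2$ and $i\in\{1,2,3\}$,
\begin{equation}\label{trilin-plan}
\|\mathcal{T}_i(f,g,h)\|_{L^\infty_s} \leq C \|f\|_{L^\infty_s} \|g\|_{L^\infty_s} \|h\|_{L^\infty_s},
\end{equation}
with a constant $C=C(s,\Omega)$ depending only on $s$ and the constants $c_1,c_2$ of Assumption \ref{Assump1}. Granting \eqref{trilin-plan}, I work in $X_T:=C([0,T];L^\infty_s)$ and iterate $f^{(0)}(t,p):=f_0(p)$, $f^{(n+1)}(t,p):=f_0(p)+\int_0^t \mathcal{Q}[f^{(n)}](\tau,p)\,d\tau$ on the closed ball $\{f\in X_T:\|f\|_{X_T}\leq 2R\}$. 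The trilinear bound yields $\|f^{(n+1)}\|_{X_T}\leq R+C'T(2R)^3\leq 2R$ as soon as $T\leq c R^{-2}$, which is the advertised lifespan. Expanding $\mathcal{Q}[f]-\mathcal{Q}[g]$ as a sum of trilinear forms in which one argument equals $f-g$ and applying \eqref{trilin-plan} again gives a contraction, hence a unique fixed point; continuous dependence in $f_0$ comes from the same estimate combined with Gronwall, and the $C^1$-in-time regularity is automatic since $\partial_t f=\mathcal{Q}[f]$ is continuous in $t$ with values in $L^\infty_s$.

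The real work is proving \eqref{trilin-plan}. Bounding the integrand pointwise by the $L^\infty_s$ norms reduces the task to kernel estimates of the shape
$$K(p) \;=\; \int_{\mathbb{R}^9}\delta(p+p_1-p_2-p_3)\,\delta(\omega+\omega_1-\omega_2-\omega_3)\,W(p,p_1,p_2,p_3)\,dp_1\,dp_2\,dp_3 \;\lesssim\; \langle p\rangle^{-s},$$
where $W$ is a product of weights $\langle p_j\rangle^{-s}$ over those indices $j$ that appear in the particular operator under consideration. One first integrates out the momentum $\delta$ by setting $p_1=p_2+p_3-p$, leaving a six-dimensional integral against $\delta(\Phi)$, where
$$\Phi(p_2,p_3):=\omega(p)+\omega(p_2+p_3-p)-\omega(p_2)-\omega(p_3).$$
Assumption \ref{Assump1}(iii) ($\Omega'(x)\geq c_1 x$) controls $\nabla_{p_2,p_3}\Phi$ away from a small degeneracy set and, through a coarea formula, yields a usable surface measure on the resonant manifold $\{\Phi=0\}$; Assumption \ref{Assump1}(iv) (the doubling-type condition) supplies the dyadic comparisons between $\omega$-values at different momenta that are needed to recover the decay $\langle p\rangle^{-s}$ in the output variable. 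The threshold $s>2$ arises precisely because the five-dimensional surface integral of $\langle p_2\rangle^{-s}\langle p_3\rangle^{-s}$ is borderline at $s=2$; carrying out this kernel analysis uniformly in $p$ is the main technical obstacle.

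For part (ii), I split $\mathcal{Q}[f]=\mathcal{G}[f]-f\cdot\mathcal{L}[f]$ with
$$\mathcal{L}[f](p):=\int \delta\,\delta\,f_1(f_2+f_3)\,dp_1 dp_2 dp_3,\qquad \mathcal{G}[f](p):=\mathcal{T}_1(f,f,f)(p)+f(p)\int \delta\,\delta\,f_2 f_3\,dp_1 dp_2 dp_3,$$
both non-negative whenever $f\geq 0$. Rather than use the iteration of part (i), I consider the modified scheme
$$\partial_t f^{(n+1)}+\mathcal{L}[f^{(n)}]\,f^{(n+1)}=\mathcal{G}[f^{(n)}],\qquad f^{(n+1)}(0)=f_0,$$
whose integrating-factor representation
$$f^{(n+1)}(t,p)=e^{-\int_0^t\mathcal{L}[f^{(n)}](\tau,p)\,d\tau}f_0(p)+\int_0^t e^{-\int_\sigma^t \mathcal{L}[f^{(n)}](\tau,p)\,d\tau}\mathcal{G}[f^{(n)}](\sigma,p)\,d\sigma$$
manifestly preserves non-negativity whenever $f^{(n)}\geq 0$. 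The same trilinear estimate \eqref{trilin-plan} shows this modified iteration contracts on the ball $\{f\in X_T:\|f\|_{X_T}\leq 2R\}$ for $T\lesssim R^{-2}$; by construction its limit solves the original equation and, by the uniqueness from part (i), coincides with $f$, which is therefore non-negative on $[0,T]$.
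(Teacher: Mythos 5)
Your part (i) follows the paper's route exactly: a Picard/Duhamel iteration on the ball of radius $2R$ in $C([0,T];L^\infty_s)$ with $T\sim_s R^{-2}$, powered by the trilinear bound of Proposition \ref{PropLinfty} (which the paper establishes separately by parameterizing the resonant manifold; your sketch of that estimate, with Assumption \ref{Assump1}(iii) controlling the gradient of the resonance function and (iv) giving the comparability needed to recover the weight, is consistent with Sections \ref{Sec:T1}--\ref{Sec:T3}). Part (ii) is where you genuinely diverge. The paper keeps the explicit scheme but discretizes in time: it runs a forward Euler iteration $g^{n,m+1}=g^{n,m}+\Delta_n\mathcal{Q}[g^{n,m}]$, proves $g^n\to f$ in $C([0,T];L^\infty_s)$, and deduces positivity from the pointwise lower bound $\mathcal{Q}[g]\ge -2\,g\,\mathcal{Q}_2(g,g)\ge -C_sR^2 g$ (the loss kernel being bounded in $L^\infty$), so that $g^{n,m+1}\ge(1-C_sR^2\Delta_n)\,g^{n,m}\ge 0$. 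You instead use the gain--loss splitting $\mathcal{Q}[f]=\mathcal{G}[f]-f\,\mathcal{L}[f]$ and a linearized, Kaniel--Shinbrot-type iteration with integrating factor, which manifestly preserves non-negativity, and then identify its limit with $f$ via the uniqueness of part (i). Both routes are valid: yours avoids the Euler-convergence step (the paper's Step 2) at the cost of setting up and contracting a second, non-explicit iteration, while the paper's discretization reuses the already-constructed solution and needs only the one-sided bound on $\mathcal{Q}$. One precision you should add: your exponential factor and the contraction of the modified scheme require the bilinear bounds $\|\mathcal{L}[f]\|_{L^\infty}\lesssim_s\|f\|_{L^\infty_s}^2$ (and likewise for the gain kernel), which are not literally your stated trilinear estimate; they do follow from it by taking one argument equal to $\langle p\rangle^{-s}$, since $\mathcal{T}_3(\langle\cdot\rangle^{-s},g,h)(p)=\langle p\rangle^{-s}\,\mathcal{Q}_2(g,h)(p)$, or directly from the kernel analysis of Sections \ref{Sec:T2}--\ref{Sec:T3} --- the same ingredient the paper invokes in its own positivity step.
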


In the special Schr\"{o}dinger case, we prove also a stronger local-wellposedness theorem in  $L^2_s$, $s>1/2$. More precisely:

\begin{theorem}\label{theorem:main2} (i) Assume that $\omega(p)=|p|^2$ and $s>1/2$. Then the initial-value problem \eqref{4waveEquivalence} is locally well-posed in $L^2_s$ for $s>1/2$: for any $R>0$ there is $T\gtrsim_sR^{-2}$ such that for any initial-data $f_0\in L^2_s$ with $\|f_0\|_{L^2_s}\leq R$, there is a unique solution $f$ in $C^1([0,T]:L^2_s)$ of the initial-value problem \eqref{4waveEquivalence}. Furthermore, $\|f(t)\|_{L^2_s}\leq 2R$ for any $t\in[0,T]$ and the map $f_0 \mapsto f$ is continuous from $L^2_s$ to $C^1([0,T]:L^2_s)$.

(ii) If $f_0\geq 0$ then $f(t)$ is non-negative for any $t\in[0,T]$.
\end{theorem}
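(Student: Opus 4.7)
The plan is to reduce Theorem \ref{theorem:main2} to a single trilinear estimate and then run a standard Picard iteration. The key bound, specific to the Schr\"odinger case $\omega(p)=|p|^2$, is
\begin{equation*}
\|\mathcal{T}_i(f,g,h)\|_{L^2_s}\lesssim_s\|f\|_{L^2_s}\|g\|_{L^2_s}\|h\|_{L^2_s},\qquad i=1,2,3,\quad s>1/2.
\end{equation*}
Granted this, trilinearity yields the difference estimate $\|\mathcal{Q}[f]-\mathcal{Q}[g]\|_{L^2_s}\lesssim_s (\|f\|_{L^2_s}^2+\|g\|_{L^2_s}^2)\|f-g\|_{L^2_s}$, so the map $\Phi(f)(t):=f_0+\int_0^t \mathcal{Q}[f](\sigma)\,d\sigma$ is a contraction on the closed ball of radius $2R$ in $C([0,T];L^2_s)$ for $T\sim_s R^{-2}$. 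Existence, uniqueness, the bound $\|f(t)\|_{L^2_s}\le 2R$, and continuous dependence on $f_0$ all follow in the standard fashion, and one further upgrades $f$ to $C^1([0,T];L^2_s)$ by reading off $\partial_t f=\mathcal{Q}[f]$ from the trilinear estimate.

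For the trilinear estimate I would exploit the algebraic structure of the resonant manifold peculiar to $\omega=|p|^2$. Integrating out $p_1=p_2+p_3-p$ via the momentum delta and using the identity $|p|^2+|p_2+p_3-p|^2-|p_2|^2-|p_3|^2=2(p-p_2)\cdot(p-p_3)$, the energy delta reduces to $\tfrac12\delta((p-p_2)\cdot(p-p_3))$: the orthogonality of $p-p_2$ and $p-p_3$. Setting $y=p-p_2$, $z=p-p_3$, writing $y=r\omega$ with $\omega\in S^2$, and using $\delta(y\cdot z)$ to localise $z$ to the 2-plane $\omega^\perp$, each $\mathcal{T}_i$ takes the form of a spherical/radial/planar integral of three shifted evaluations of $f,g,h$. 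For $\mathcal{T}_2$ and $\mathcal{T}_3$ the pointwise factor $f(p)$ pulls outside, and it suffices to bound the remaining bilinear form in $L^\infty_p$ by $\|g\|_{L^2_s}\|h\|_{L^2_s}$, which follows from Cauchy--Schwarz on $S^2$ combined with standard mapping properties of the 2-plane (Radon) transform on weighted Lebesgue spaces. For $\mathcal{T}_1$, which has no free $f(p)$ factor, I would first distribute the target weight across the three inputs via the on-resonance inequality $\langle p\rangle^2\le\langle p_2\rangle^2+\langle p_3\rangle^2$, and then close by Cauchy--Schwarz in $(\omega,r,z')$, balancing the angular, radial, and planar integrations so that the threshold $s>1/2$ is just attained. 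The exponent $s=1/2$ is sharp: under the invariance $f(p)\mapsto\lambda^2 f(\lambda p)$ of \eqref{4waveEquivalence}, the $L^2_s$ norm scales like $\lambda^{1/2}$ at high frequencies.

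For nonnegativity I would decompose $\mathcal{Q}[f]=\mathcal{T}_1(f,f,f)-f\cdot\mathcal{L}[f]$, where $\mathcal{L}[f]:=2B_3(f,f)-B_2(f,f)$ and $B_j(g,h)(p)$ is the bilinear kernel obtained by factoring $f(p)$ out of $\mathcal{T}_j(f,g,h)$ for $j=2,3$. When $f\ge 0$ one has $\mathcal{T}_1(f,f,f)\ge 0$ pointwise and $\|\mathcal{L}[f]\|_{L^\infty_{t,p}}<\infty$ from the trilinear bound, so the Duhamel representation with integrating factor
\begin{equation*}
f(t,p)=f_0(p)\,e^{-\int_0^t\mathcal{L}[f](\tau,p)\,d\tau}+\int_0^t\mathcal{T}_1(f,f,f)(\sigma,p)\,e^{-\int_\sigma^t\mathcal{L}[f](\tau,p)\,d\tau}\,d\sigma
\end{equation*}
preserves positivity; this identity is first derived on a regularised equation (e.g.\ truncating large momenta in $\mathcal{Q}$) and then transported to the limit by the contraction argument. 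The main obstacle is the trilinear estimate for $\mathcal{T}_1$: without a pointwise $f(p)$ factor to carry the target weight, one must split $\langle p\rangle^{2s}$ across the three inputs using the on-shell identity and then balance the spherical, radial, and planar integrations so that the endpoint $s=\tfrac12$ is barely missed; a cruder approach (e.g.\ direct Cauchy--Schwarz in $(y,z)$) would require $s>3/2$ and would miss the sharp exponent.
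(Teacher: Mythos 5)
Your part (i) is the same argument as the paper's: granted the trilinear bound of Proposition \ref{PropL2}, the Picard/contraction scheme on the ball of radius $2R$ with $T\sim_s R^{-2}$ is exactly Section \ref{Sec:MainProof}. The genuine gap is in your treatment of the estimate for $\mathcal{T}_1$, which is the crux of the threshold $s>1/2$. For $\mathcal{T}_2,\mathcal{T}_3$ your plan (factor out $f(p)$, bound the bilinear kernel in $L^\infty$ by Cauchy--Schwarz on spheres plus Schur) is essentially the paper's Lemmas \ref{prop1}--\ref{prop2} and \ref{lem4}. But $\mathcal{T}_1$ has no factored $f(p)$, and ``distribute the weight via $\langle p\rangle^2\le\langle p_2\rangle^2+\langle p_3\rangle^2$ and then balance the angular, radial and planar integrations so that $s>1/2$ is just attained'' is an assertion, not an argument: after splitting the weight you still face an $L^2\times L^2\times L^2\to L^2$ bound for a Radon-type transform along the resonant set, and it is precisely here that the paper must bring in a different mechanism --- writing both deltas as oscillatory integrals and proving the Strichartz-type estimate \eqref{ma10}, $\big[\int_{\mathbb{R}}\|Lb(\cdot,t)\|_{L^\infty}^2\,dt\big]^{1/2}\lesssim_s\|b\|_{L^2_s}$, via a $TT^\ast$ argument and the dispersive kernel bound \eqref{ma13} (Lemma \ref{Lem1}). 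Your own observation that crude Cauchy--Schwarz only reaches $s>3/2$ shows you have located the difficulty, but the proposal supplies no substitute for the dispersive/curvature input, so the claimed near-critical bound for $\mathcal{T}_1$, and hence the theorem at $s>1/2$, is not established as written.

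For part (ii) your route differs from the paper's, which uses a forward Euler time discretization precisely because (as the paper notes) the Picard iterates do not preserve non-negativity; there, positivity is immediate from $g^{n,m+1}\ge(1-C_sR^2T/n)\,g^{n,m}$ together with the $L^\infty$ bounds on $Q_2,Q_3$, and it passes to the limit. Your integrating-factor/Duhamel representation is a legitimate alternative in spirit (Kaniel--Shinbrot style), but as stated it is circular: the formula yields $f(t)\ge 0$ only if the gain term $\mathcal{T}_1(f,f,f)(\sigma)\ge 0$ for $\sigma\le t$, which presupposes $f(\sigma)\ge 0$, the very thing being proved; truncating large momenta does not remove this loop. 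It can be repaired --- e.g.\ iterate in the positive cone by solving the linear problems $\partial_t f^{k+1}+\mathcal{L}[f^k]f^{k+1}=\mathcal{T}_1(f^k,f^k,f^k)$, or run Gr\"onwall on $\|f_-(t)\|_{L^2_s}$ using $\mathcal{T}_1(f_+,f_+,f_+)\ge 0$ and the trilinear bound --- but that step is the content of the proof and must be spelled out. Note also that the pointwise bound $\|\mathcal{L}[f]\|_{L^\infty_{t,p}}<\infty$ does not follow from the $L^2_s$ trilinear estimate; it requires exactly the bilinear $L^\infty$ estimates $\|Q_j(F,G)\|_{L^\infty}\lesssim_s\|F\|_{L^2_s}\|G\|_{L^2_s}$ of Lemmas \ref{prop1} and \ref{prop2}, which you should invoke explicitly.
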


Theorems \ref{theorem:main} and \ref{theorem:main2} follow by fixed point arguments from the following propositions: 

\begin{proposition}
\label{PropLinfty}
Assume that $\omega$ satisfies Assumption~\ref{Assump1}, $s>2$, and $0 \leq \gamma < \min(s-2,1)$. Then the operators $\mathcal{T}_j$, $j\in\{1,2,3\}$,  defined in \eqref{Operators} are bounded from $(L^{\infty}_s)^3$ to $L^{\infty}_{s+\gamma}$, i.e.
\begin{equation*}
\|\mathcal{T}_j(f,g,h)\|_{L^\infty_s}\lesssim_s\|f\|_{L^\infty_s}\|g\|_{L^\infty_s}\|h\|_{L^\infty_s}.
\end{equation*}
\end{proposition}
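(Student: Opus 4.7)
The aim is to prove the stronger bound $\|\mathcal{T}_j(f,g,h)\|_{L^\infty_{s+\gamma}}\lesssim_s\|f\|_{L^\infty_s}\|g\|_{L^\infty_s}\|h\|_{L^\infty_s}$, corresponding to the mapping property $\mathcal{T}_j:(L^\infty_s)^3\to L^\infty_{s+\gamma}$ asserted in the proposition. Substituting the pointwise decay $|f(q)|\leq\|f\|_{L^\infty_s}\langle q\rangle^{-s}$ and similarly for $g,h$, and using the momentum delta to set $p_1=p_2+p_3-p$, it suffices to show, uniformly in $p$,
\begin{equation*}
J_j(p):=\int_{\mathbb{R}^6}\delta\bigl(\Omega(|p|)+\Omega(|p_1|)-\Omega(|p_2|)-\Omega(|p_3|)\bigr)\,W_j(p,p_1,p_2,p_3)\,dp_2\,dp_3\lesssim\langle p\rangle^{-s-\gamma},
\end{equation*}
where $W_j$ is the product of three factors $\langle\cdot\rangle^{-s}$ evaluated at whichever momenta appear in $\mathcal{T}_j$.

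For $\mathcal{T}_2$ and $\mathcal{T}_3$ the value $f(p)$ sits outside the integral and contributes $\langle p\rangle^{-s}$ automatically. For $\mathcal{T}_1$ all three weights lie on the integration variables, but the energy identity $\Omega(|p_2|)+\Omega(|p_3|)=\Omega(|p|)+\Omega(|p_1|)\geq\Omega(|p|)$ together with the doubling condition Assumption~\ref{Assump1}(iv) forces $\max(|p_2|,|p_3|)\geq|p|/c_2$: rewriting (iv) as $\Omega(y)\geq 2\Omega(y/c_2)$, the inequality $\Omega(y)\geq\Omega(|p|)/2$ yields $y\geq|p|/c_2$ by the monotonicity supplied by (iii). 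In every case one weight is therefore extracted as $\langle p\rangle^{-s}$, reducing matters to showing that an integral of the shape $\int\delta(E)\,\langle p_a\rangle^{-s}\langle p_b\rangle^{-s}\,dp_a\,dp_b$, with $E$ the phase above and $a,b$ two of the remaining momenta, is bounded by $\langle p\rangle^{-\gamma}$.

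To dispose of the remaining energy delta I would fix one variable $p_b$, pass to spherical coordinates $p_a=r\sigma$, and apply the co-area formula in $r$: the radial derivative of the phase equals $\Omega'(|p_1|)\,\sigma\cdot p_1/|p_1|-\Omega'(r)$, whose magnitude is bounded below by $c_1r$ (from Assumption~\ref{Assump1}(iii)) outside of a controlled set of angles where $\sigma$ nearly aligns with $p_1/|p_1|$ and the two terms cancel. A case split on the relative sizes of $|p|$, $r$, $|p_b|$, $|p_1|$, using (iii)--(iv) to keep the Jacobian from degenerating and to prevent $\Omega'$ from collapsing, produces an inner bound decaying like $\langle p_b\rangle^{-(s-\gamma_1)}$ for some $\gamma_1<1$; integrating against $\langle p_b\rangle^{-s}$ in $\mathbb{R}^3$ then yields $\langle p\rangle^{-\gamma}$ for any $\gamma<\min(s-2,1)$, the two barriers reflecting respectively the effective one-dimensional nature of the energy reduction and the $L^1(\mathbb{R}^3)$ integrability of the residual weight $\langle\cdot\rangle^{-s+\gamma_1+2}$.

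The main obstacle I anticipate is executing this co-area reduction uniformly in $p$ for a general dispersion relation $\Omega$ satisfying only Assumption~\ref{Assump1}, rather than the Schr\"odinger model $\Omega(x)=x^2$ where the resonant manifold parametrizes cleanly as a sphere and the Jacobian has a closed form. In the general setting the gradient of the phase can nearly cancel along directions with $\sigma\cdot p_1/|p_1|\approx\Omega'(r)/\Omega'(|p_1|)$, and the full strength of both bounds in Assumption~\ref{Assump1}(iii)--(iv)---a quadratic-type lower bound on $\Omega'$ together with a doubling property of $\Omega$---is exactly what should be needed to control the measure of these near-resonant angles and to pin down the sharp exponent $\gamma<\min(s-2,1)$.
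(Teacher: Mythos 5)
Your opening reductions match the paper's: pointwise substitution of the weights, elimination of one momentum by the convolution delta, and, for $\mathcal{T}_1$, extraction of a factor $\langle p\rangle^{-s}$ from whichever of $p_2,p_3$ carries at least half the energy, using Assumption~\ref{Assump1}(iv) together with monotonicity. But the heart of the proposition is the resolution of the remaining energy delta for a \emph{general} dispersion relation, and this is precisely the step you do not carry out. Your plan — fix $p_b$, write $p_a=r\sigma$, and apply the co-area formula in the radial variable $r$ — runs into a genuinely degenerate set where $\partial_r$ of the phase vanishes, and your claim that this derivative is bounded below by $c_1r$ off a small set of angles is false even in the model case $\Omega(x)=x^2$: there the radial derivative equals $2(p_3-p)\cdot\sigma$, which is bounded by $2|p_3-p|$ uniformly in $r$, so no lower bound of size $r$ is available when $|p_3-p|\ll r$, and the co-area factor blows up on an angular set whose contribution must be balanced against the geometry of the level set — exactly the analysis you defer to an unspecified ``case split.'' You also never exhibit the mechanism that produces the final decay $\langle p\rangle^{-\gamma}$: integrating an inner bound $\langle p_b\rangle^{-(s-\gamma_1)}$ against $\langle p_b\rangle^{-s}$ over $\mathbb{R}^3$ gives a constant, not a $p$-dependent gain, so the stated conclusion does not follow from the steps as written. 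Since you yourself identify this delta-resolution as ``the main obstacle,'' the proposal is a program rather than a proof.

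For contrast, the paper avoids any degenerate set by slicing along the distinguished direction $\rho=p+p_1$ rather than radially in one momentum: Assumption~\ref{Assump1}(iii) makes $\mathfrak{G}(z)=\omega(\rho-z)+\omega(z)-\omega(p)-\omega(p_1)$ strictly increasing in every direction orthogonal to $\rho$, so the resonant manifold $\mathcal{S}_{p,p_1}$ meets each plane $\{\alpha\rho+q:\rho\cdot q=0\}$ in a circle of radius $r_\alpha$. One then computes $d\mu(z)/|\nabla_z\mathfrak{G}|$ exactly in the coordinates $(\alpha,\theta)$, verifies that $\alpha\mapsto|z_\alpha|$ is monotone, and after the change of variables $u=|z_\alpha|$ obtains the uniform bound $d\mu(z)/|\nabla_z\mathfrak{G}|\lesssim (u/|\rho|)\,du\,d\theta$ of \eqref{proposition:T2L2:E9b}, with (iii) used only through $x/\Omega'(x)\lesssim 1$. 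The remaining integral is then closed with the spherical-average inequality \eqref{Inequality}, which is where the thresholds $s>2$ and $\gamma<\min(s-2,1)$ appear. If you want to complete your route, you would need either to prove a quantitative lower bound on the angular measure of the near-degenerate set together with matching bounds on the surface measure there, or to switch to the paper's slicing, where no such set exists.
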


\begin{proposition} \label{PropL2}
Assume that $\omega(p)=|p|^2$ and $s>1/2$. Then the operators $\mathcal{T}_j$, $j\in\{1,2,3\}$,  defined in \eqref{Operators} are bounded from $(L^2_s)^3$ to $L^{2}_{s}$, i.e.
\begin{equation*}
\|\mathcal{T}_j(f,g,h)\|_{L^2_s}\lesssim_s\|f\|_{L^2_s}\|g\|_{L^2_s}\|h\|_{L^2_s}.
\end{equation*}
\end{proposition}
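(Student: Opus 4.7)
My plan exploits the quadratic dispersion $\omega(p)=|p|^2$ to reduce the resonance integrals to a clean form involving the orthogonality constraint $\delta(u\cdot v)$, to distribute the weight via momentum conservation, and then to estimate the resulting bilinear and trilinear quantities through Cauchy-Schwarz combined with a Plancherel--Strichartz analysis for the free Schr\"odinger evolution.

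First I integrate out $p_1$ using the momentum delta and substitute $u=p_2-p$, $v=p_3-p$; the energy delta then collapses to $\tfrac12\delta(u\cdot v)$, since $|p|^2+|p_1|^2-|p_2|^2-|p_3|^2=2(p-p_2)\cdot(p-p_3)=2u\cdot v$. This yields the (CR)-type representations
\[
\mathcal{T}_1(f,g,h)(p)=\tfrac12\int_{\mathbb{R}^6}\delta(u\cdot v)\,f(p+u+v)g(p+u)h(p+v)\,du\,dv,
\]
together with analogous formulas for $\mathcal{T}_2$ and $\mathcal{T}_3$, in which $f(p)$ is factored out front multiplying a bilinear integral over the variety $\{u\cdot v=0\}$. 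Because $p=(p+u)+(p+v)-(p+u+v)$, one has $\langle p\rangle^s\lesssim\langle p+u\rangle^s+\langle p+v\rangle^s+\langle p+u+v\rangle^s$, which lets me transfer the outer weight $\langle p\rangle^s$ onto one of the three shifted arguments. For $\mathcal{T}_2$ and $\mathcal{T}_3$ the weight is absorbed directly into $f(p)$, so the task reduces to the uniform bilinear bound $\|\mathcal{B}(g,h)\|_{L^\infty}\lesssim\|g\|_{L^2_s}\|h\|_{L^2_s}$, where $\mathcal{B}(g,h)(p):=\int\delta(u\cdot v)g(p+u)h(p+v)\,du\,dv$. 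For $\mathcal{T}_1$ one is led to three unweighted estimates of the form $\|\mathcal{T}_1(F,G,H)\|_{L^2}\lesssim\|F\|_{L^2}\|G\|_{L^2_s}\|H\|_{L^2_s}$ (with appropriate permutations of the three slots).

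The bilinear bound I would handle by parameterizing $v=r\alpha$ with $r>0$, $\alpha\in S^2$, and $u\in\alpha^\perp$; the $\delta$ produces the Jacobian $1/r$, and one obtains
\[
\mathcal{B}(g,h)(p)=\int_{S^2}\mathcal{R}_\alpha g(p)\Bigl(\int_0^\infty r\,h(p+r\alpha)\,dr\Bigr)d\alpha,
\]
where $\mathcal{R}_\alpha g(p):=\int_{\alpha^\perp}g(p+w)\,dw$ is the 2-plane Radon transform through $p$. A Cauchy--Schwarz in $r$ against the weight $\langle p+r\alpha\rangle^s$ uses the finiteness of $\int_0^\infty\langle p+r\alpha\rangle^{-2s}\,dr$ uniformly in $p$ and $\alpha$, which holds exactly when $s>1/2$. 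A further Cauchy--Schwarz in $\alpha\in S^2$ together with a polar-to-Cartesian change of variables then yields the estimate, provided $\int_{S^2}|\mathcal{R}_\alpha g(p)|^2d\alpha$ is controlled by $\|g\|_{L^2_s}^2$ uniformly in $p$. This last bound is obtained from the Fourier slice identity $\mathcal{R}_\alpha g(p)=(2\pi)^{-1}\int\widehat g(s\alpha)e^{is\,p\cdot\alpha}\,ds$ together with a weighted Cauchy--Schwarz in $s$.

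For $\mathcal{T}_1$ I use the Plancherel-type identity obtained by rewriting both delta functions as Fourier integrals in $x$ and $t$:
\[
\int\mathcal{T}_1(f,g,h)(p)\overline{\phi(p)}\,dp=C\iint_{\mathbb{R}\times\mathbb{R}^3}\widetilde U_\phi(x,t)\,\widetilde U_f(x,t)\,\overline{\widetilde U_g(x,t)\,\widetilde U_h(x,t)}\,dx\,dt,
\]
where $\widetilde U_\psi(x,t):=\int e^{ix\cdot p+it|p|^2}\psi(p)\,dp$ is the free Schr\"odinger evolution of the inverse Fourier transform of $\psi$. A Cauchy--Schwarz pairs the four factors into two $L^2_{t,x}$ bilinear products, which are then handled by the 3D bilinear Schr\"odinger Strichartz inequality combined with the embedding $\|\psi\|_{\dot H^{1/4}}\lesssim\|\psi\|_{L^2_s}$ valid for $s\geq 1/4$ (which enables $L^4_{t,x}$-type Strichartz bounds). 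The main obstacle is the bilinear estimate of Step 3: the singular factor $1/|v|$ from $\delta(u\cdot v)$, combined with the lack of uniform decay of the Radon transform on large planes, forces a delicate balance between the angular and radial weights; the condition $s>1/2$ is sharp, reflecting the one-dimensional Sobolev endpoint for line integrals in $\mathbb{R}^3$.
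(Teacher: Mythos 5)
Your reduction of $\mathcal{T}_2$ to a uniform bound on $\mathcal{B}(g,h)(p)=\int\delta(u\cdot v)\,g(p+u)h(p+v)\,du\,dv$ coincides with the paper's starting point (this is exactly the operator $Q_2$ there), and the Cauchy--Schwarz in $r$ using $\int_0^\infty\langle p+r\alpha\rangle^{-2s}\,dr\lesssim_s1$ is fine; but your proof of the remaining key estimate $\int_{\mathbb{S}^2}|\mathcal{R}_\alpha g(p)|^2\,d\alpha\lesssim_s\|g\|_{L^2_s}^2$ does not work in the claimed range. After the Fourier slice identity and a Cauchy--Schwarz in the radial frequency variable with weight $\langle t\rangle^{2\sigma}$, $\sigma>1/2$, integrating over $\alpha\in\mathbb{S}^2$ produces the quantity $\int_{\mathbb{R}^3}\langle\xi\rangle^{2\sigma}|\xi|^{-2}|\hat g(\xi)|^2\,d\xi$, whereas $\|g\|_{L^2_s}^2\simeq\|\hat g\|_{H^s(\mathbb{R}^3)}^2$ is a regularity norm of $\hat g$, not a frequency-weighted norm. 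Taking $\hat g(\xi)=\phi(\xi/\delta)$, a bump at the frequency origin, the former is of size $\delta$ while the latter is of size $\delta^{3-2s}$, so the inequality you need fails for every $s<1$; structurally, doing Cauchy--Schwarz pointwise in $\alpha$ before integrating amounts to restricting $\hat g\in H^s$ to lines (codimension two), which requires $s>1$, not $s>1/2$. The angular square-function bound you assert is in fact true for $s>1/2$, but it needs an argument that keeps the $\alpha$-integration coupled --- for instance the $TT^\ast$ argument with the kernel bound $|y|^{-1}|y'|^{-1}(|\widehat{y}-\widehat{y'}|^{-1}+|\widehat{y}+\widehat{y'}|^{-1})$ and Schur's lemma on the sphere, as in the paper, or a Funk-transform bound after Cauchy--Schwarz along rays --- not the decoupled slice argument you give. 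Separately, $\mathcal{T}_3$ does not reduce to the same bilinear form: its kernel after substitution is $\delta(u\cdot v)\,g(p+u+v)h(p+u)$, i.e.\ the resonance $2(x-y)\cdot(y-p)$, so the relevant plane integrals are over planes not passing through the fixed point $p$, your decoupling of the $r$- and $\alpha$-integrations no longer applies, and this case needs its own estimate (the paper treats it separately as $Q_3$).

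For $\mathcal{T}_1$ the space-time Fourier representation is again the paper's starting point, but your closing step is flawed. The ``embedding $\|\psi\|_{\dot H^{1/4}}\lesssim\|\psi\|_{L^2_s}$'' is false as stated (spatial decay gives no Sobolev regularity); what is true is that the initial datum of $\widetilde U_\psi$, namely the inverse Fourier transform of $\psi$, lies in $H^s$ when $\psi\in L^2_s$. Even with that reading, after dualizing and transferring the outer weight $\langle p\rangle^s$ onto one of the inputs, two of the four evolutions in your pairing (the dual test function and one input) have merely $L^2$ data, and the 3D $L^4_{t,x}$ Strichartz estimate requires $\dot H^{1/4}$ data for each factor, so pairing the two rough factors together cannot close. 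One must pair each rough factor with a smooth one and use an asymmetric bilinear estimate such as $\|e^{it\Delta}u\,e^{it\Delta}v\|_{L^2_{t,x}}\lesssim\|u\|_{L^2}\|v\|_{H^s}$ for $s>1/2$, or argue as the paper does, via the product rule $\|G\|_{H^s}\lesssim\sum\|La\|_{H^s}\|Lb\|_{L^\infty}\|Lc\|_{L^\infty}$ and the weighted $L^2_tL^\infty_x$ Strichartz bound proved by $TT^\ast$ and the dispersive kernel estimate. The threshold $s\geq 1/4$ appearing in your sketch is a symptom of this mis-pairing: it would wrongly suggest the estimate holds below the scaling-critical exponent $s=1/2$, which, as you yourself note at the end, is sharp.
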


Propositions \ref{PropLinfty} and \ref{PropL2} and Theorems \ref{theorem:main} and \ref{theorem:main2} are proved in the next three sections. We conclude this section with several remarks:

\begin{remark}
The above theorems are optimal in terms of the exponent $s$ since it is not possible to define the operators $\mathcal{T}_j$ if $\omega(p) = |p|^2$ and the input functions have general tails decaying like $|p|^{-2}$. The two  theorems are also nearly critical since the spaces $L^{\infty}_s$, $s>2$, and $L^2_s$, $s>1/2$, are nearly critical with respect to the scaling~\eqref{scaling} of the equation. 
\end{remark}

\begin{remark}
We are working in dimension $d=3$ mostly for the sake of concreteness. Similar theorems hold in any dimension $d\geq 2$, with the corresponding ranges of exponents $s>d-1$ for the $L^\infty_s$ local well-posedness theory, and  $s>(d-2)/2$ for the $L^2_s$ local well-posedness theory. 
\end{remark}

\begin{remark} As long as $\omega(p) \sim |p|^2$ for $|p| \to \infty$, the stationary solutions~\eqref{stationary} are on the borderline of the local well-posedness theory, since they belong to the scale-invariant space $L^\infty_2$. Notice that this only occurs in dimension $3$.
\end{remark}

\begin{remark}
It is probably possible to prove nearly critical $L^2_s$ local well-posedness theorems for more general radial dispersion relations $\omega$. However, one would likely have to assume some additional curvature assumptions on $\omega$, expressed in terms of bounds on the second derivative $\Omega''$, in order to be able to run $TT^\ast$ arguments for Radon transforms, as in section \ref{Proof4}. For simplicity, we consider here only the Schr\"{o}dinger case $\omega(p)=|p|^2$. 
\end{remark}

\begin{remark} It would be possible to prove identical local well-posedness results for the more general equation $\partial_t f = a_1\mathcal{T}_1(f,f,f)+a_2\mathcal{T}_2(f,f,f)+a_3\mathcal{T}_3(f,f,f)$, but the conservation law and the positivity of the solution would be lost.
\end{remark}

\begin{remark} The solution given by Theorem~\ref{theorem:main} has the property that
$$
f(t,p) - f_0(p) \in C^1([0,T),L^\infty_{s+\gamma})
$$
for some $\gamma>0$ (as a consequence of Proposition \ref{PropLinfty}). This means that the decay at $\infty$ of $f(t)$ is exactly the same as that of the data $f_0$. This should of course be contrasted with the cases of the classical Boltzmann equation \cite{AlonsoGamba:2013:ANA,BobylevGamba:2006:BEF,GambaPanferov:2004:OTB,GambaPanferovVillani:2009:UMB} and the quantum Boltzmann equation for bosons at very low temperature \cite{AlonsoGambaBinh} (this is also the weak turbulence kinetic equation for anharmonic crystal lattices), for which the decay of the solution is immediately improved.
\end{remark}

\begin{remark} For some data one can prove additional properties of the solution, such as conservation laws. See section \ref{Further}.
\end{remark}

\section{Proof of Proposition~\ref{PropLinfty}: $L^\infty_s (s>2)$ boundedness of $\mathcal{T}_j$}

Notice that, in the case $\omega(p) = |p|^2$, the desired bound follows easily from the formulation~\eqref{1D4wave}. The aim of this section is to explore the case of more general dispersion relations~$\omega$, for which no such simple representation of the collision operator is available.

\subsection{Boundedness of $\mathcal{T}_1$}\label{Sec:T1}

\begin{proposition}\label{proposition:T1}
For $s>2$ and $0\le \gamma < \min(s-2,1) $, and under Assumption~\ref{Assump1}, the operator~$\mathcal{T}_1$ is bounded from $(L^{\infty}_s)^3$ to $L^{\infty}_{s+\gamma}$.
\end{proposition}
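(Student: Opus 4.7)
The plan is to reduce the boundedness of $\mathcal{T}_1$ to a weighted integral estimate on the resonance manifold. After eliminating $p_1=p_2+p_3-p$ via the momentum delta and bounding each input by its $L^\infty_s$ norm, matters reduce to proving the pointwise bound
\[
I(p):=\int_{\mathbb{R}^6}\delta(\omega(p)+\omega_1-\omega_2-\omega_3)\langle p_1\rangle^{-s}\langle p_2\rangle^{-s}\langle p_3\rangle^{-s}\,dp_2\,dp_3\,\lesssim_s\,\langle p\rangle^{-(s+\gamma)}.
\]

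My first step would be to extract a factor $\langle p\rangle^{-s}$ from one of the three weights. On the resonance set, $\omega_2+\omega_3=\omega(p)+\omega_1\geq\omega(p)$, so $\max(\omega_2,\omega_3)\geq\omega(p)/2$; combining the monotonicity of $\Omega$ (Assumption~\ref{Assump1}(iii)) with the doubling bound $\Omega(x/c_2)\leq\Omega(x)/2$ (Assumption~\ref{Assump1}(iv)), this forces $\max(|p_2|,|p_3|)\geq|p|/c_2$. Using the $p_2\leftrightarrow p_3$ symmetry of the integrand, I may assume $|p_2|\gtrsim|p|$ and replace $\langle p_2\rangle^{-s}$ by $\langle p\rangle^{-s}$. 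The task then reduces to showing
\[
J(p):=\int_{\mathbb{R}^6}\delta(\omega(p)+\omega_1-\omega_2-\omega_3)\langle p_1\rangle^{-s}\langle p_3\rangle^{-s}\,dp_2\,dp_3\,\lesssim\,\langle p\rangle^{-\gamma}.
\]

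For $J(p)$, I would fix $p_1$ and use the co-area formula to integrate out the energy delta over $p_2\in\mathbb{R}^3$, writing
\[
J(p)=\int_{\mathbb{R}^3}\langle p_1\rangle^{-s}\int_{\Sigma_{p,p_1}}\frac{\langle p_3\rangle^{-s}}{|\nabla\omega(p_2)-\nabla\omega(p_3)|}\,d\sigma(p_2)\,dp_1,
\]
where $\Sigma_{p,p_1}:=\{p_2\in\mathbb{R}^3:\omega(p_2)+\omega(p+p_1-p_2)=\omega(p)+\omega(p_1)\}$. Assumption~\ref{Assump1}(iii) gives $|\nabla\omega(q)|=\Omega'(|q|)\geq c_1|q|$, which provides quantitative control on the co-area Jacobian when $p_2$ and $p_3$ are well separated in the radial-gradient sense, while Assumption~\ref{Assump1}(iv) confines $\Sigma_{p,p_1}$ to a ball of size polynomial in $|p|$ and $|p_1|$. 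The decay $\langle p\rangle^{-\gamma}$ should then come from integrating the resulting surface estimate against $\langle p_1\rangle^{-s}\,dp_1$.

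The main obstacle is the lack of an explicit parameterization of $\Sigma_{p,p_1}$ when $\omega$ is not quadratic, together with the fact that $|\nabla\omega(p_2)-\nabla\omega(p_3)|$ may become small on the surface when $p_2$ is close to $p_3$. I expect to handle this via a dichotomy: on a \emph{non-degenerate} piece where the radial gradients are well-separated, Assumption~\ref{Assump1}(iii) makes the co-area estimate above effective; on the \emph{degenerate} piece near $p_2\approx p_3$, I would change the order of integration (integrating $p_1$ before $p_2$, or switching to a coordinate transverse to $\nabla\omega(p_2)-\nabla\omega(p_3)$) so as to avoid the singular Jacobian, using the remaining weight $\langle p_3\rangle^{-s}$ to recover integrability. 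The paired restrictions $\gamma<s-2$ and $\gamma<1$ should emerge as the two natural obstructions in the resulting bookkeeping: $s-2$ is the gap to the scaling-critical exponent at which the stationary solutions~\eqref{stationary} live (they lie in $L^\infty_2$ in dimension three), while $\gamma<1$ reflects the one-dimensional gain available from the single energy constraint in $\mathbb{R}^3$.
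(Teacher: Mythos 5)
Your opening reduction (extracting $\langle p\rangle^{-s}$ from the larger of $\omega_2,\omega_3$ via Assumption~\ref{Assump1}, then eliminating the momentum delta and writing the energy delta as a surface integral with Jacobian $|\nabla\omega(p_2)-\nabla\omega(p_3)|^{-1}$) coincides with the paper's Step 1, but your argument stops exactly where the real work begins: you never prove a quantitative bound for the weighted surface integral $\int_{\Sigma_{p,p_1}}\langle p_3\rangle^{-s}\,|\nabla\omega(p_2)-\nabla\omega(p_3)|^{-1}\,d\sigma(p_2)$. In the paper this is the core of the proof: since $\omega$ is radial, $\mathcal{S}_{p,p_1}$ is fibered by circles in the planes orthogonal to $\rho=p+p_1$; differentiating $\mathfrak{G}(z_\alpha)=0$ shows $\alpha\mapsto|z_\alpha|$ is monotone, so $u=|z_\alpha|$ is an admissible variable, and an exact computation yields $\frac{d\mu(z)}{|\nabla_z\mathfrak{G}|}=\frac{|\rho-z|}{\Omega'(|\rho-z|)}\frac{u}{|\rho|}\,du\,d\theta\lesssim\frac{u}{|\rho|}\,du\,d\theta$ by Assumption~\ref{Assump1}(iii). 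In particular no dichotomy between degenerate and non-degenerate pieces is needed: the potentially singular factor $|\nabla_z\mathfrak{G}|^{-1}$ is cancelled identically by the Jacobian of the change of variables $\alpha\to u$, and only the bounded ratio $|\rho-z|/\Omega'(|\rho-z|)$ survives. Your proposed treatment of the degenerate region (``change the order of integration\dots using the remaining weight $\langle p_3\rangle^{-s}$'') is not an argument: near the degenerate set one has $p_2\approx p_3\approx\rho/2$, so that weight is merely of size $\langle\rho\rangle^{-s}$ and does nothing to tame $|\nabla\omega(p_2)-\nabla\omega(p_3)|^{-1}$, and you do not specify the transverse coordinate or the estimate it is supposed to produce.

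The second missing ingredient is the step that actually generates the decay $\langle p\rangle^{-\gamma}$ and the range $\gamma<\min(s-2,1)$. With the surface bound above, the paper arrives at $\sup_p\int_{\mathbb{R}^3}\frac{\langle p\rangle^{\gamma}}{\langle\rho-p\rangle^{s}|\rho|}\,d\rho$, writes $\rho=|\rho|\sigma$, and uses the spherical estimate \eqref{Inequality}, $\int_{\mathbb{S}^2}\langle A+r\sigma\rangle^{-s}d\sigma\lesssim\langle|A|-r\rangle^{2-s}\langle r\rangle^{-2}$, to reduce to $\int_0^\infty\frac{|\rho|\,\langle p\rangle^{\gamma}}{\langle|\rho|-|p|\rangle^{s-2}\langle\rho\rangle^{2}}\,d|\rho|$, which is bounded precisely when $\gamma<\min(s-2,1)$; this is where both constraints come from. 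Your sketch asserts the decay ``should come from integrating against $\langle p_1\rangle^{-s}dp_1$'' and that the two restrictions ``should emerge from the bookkeeping,'' but without the $1/|\rho|$ factor from the surface estimate and the angular gain $\langle\rho\rangle^{-2}$ this is false: the weight $\langle\rho-p\rangle^{-s}$ alone is concentrated at $\rho\approx p$ and gives no decay in $p$. So the proposal is a reasonable plan sharing the paper's initial reduction, but the two essential estimates — the parameterized surface-measure bound and the final radial/angular integration — are absent, and the claimed dichotomy would not by itself supply them.
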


\begin{proof} \underline{Step 1: first reduction.} It suffices to prove that the following integral is bounded:
\begin{equation}
\label{proposition:T1:E1}
\mathfrak{J} \ := \ \sup_{p}\iiint_{\mathbb{R}^9}\frac{\langle p\rangle^{s+\gamma}}{\langle p_1\rangle^s\langle p_2\rangle^s\langle p_3\rangle^s}\delta(p+p_1-p_2-p_3)\delta(\omega+\omega_1-\omega_2-\omega_3)\,dp_1\,dp_2\,dp_3.
\end{equation}

Since in the above integral $\omega(p) \le \omega(p_2)+\omega(p_3)$, then either $\omega(p) \le 2 \omega(p_2)$ or $\omega(p) \le 2 \omega(p_3)$. Suppose that $\omega(p)  \le 2\omega(p_3)$, which implies, by Assumption~\ref{Assump1}, that $\langle p\rangle \lesssim \langle p_3\rangle$. We then infer that
\begin{equation*}
\mathfrak{J} \ \lesssim  \ \sup_{p}\iiint_{\mathbb{R}^9}\frac{\langle p\rangle^{\gamma}}{\langle p_1\rangle^s\langle p_2\rangle^s}\delta(p+p_1-p_2-p_3)\delta(\omega+\omega_1-\omega_2-\omega_3) \, dp_1 \, dp_2\,dp_3.
\end{equation*}
Integrating out the $p_3$ variable results in
\begin{equation}\label{proposition:T1:E1a}
\mathfrak{J} \ \lesssim  \ \sup_{p}\iint_{\mathbb{R}^6}\frac{\langle p\rangle^{\gamma}}{\langle p_1\rangle^s\langle p_2\rangle^s}\delta(\omega+\omega_1-\omega_2-\omega(p+p_1-p_2))\,dp_1\,dp_2.
\end{equation}
Let us now set $z=p_2$ and define the resonant manifold $\mathcal{S}_{p,p_1}$ to be the zero set of
 \begin{equation}\label{proposition:T2L2:E3}
\mathfrak{G}(z) \ := \ \omega(p+p_1-z) + \omega (z) - \omega(p)-\omega(p_1)=0,\end{equation}
which leads to the following representation of the right hand side of \eqref{proposition:T1:E1a}, (see \cite{OsherFedkiw:2003:LSM}, section 1.5)
 \begin{equation}\label{proposition:T2L2:E3a}
\mathfrak{J} \ \lesssim  \ \sup_{p}\int_{\mathbb{R}^3}\left(\int_{\mathcal{S}_{p,p_1}}\frac{\langle p\rangle^{\gamma}}{\langle p_1\rangle^{s} \langle z\rangle^{s}|\nabla_z \mathfrak{G}(z)|}d\mu(z)\right)dp_1,
\end{equation}
where $\mu$ is the surface measure on $\mathcal{S}_{p,p_1}$.

\bigskip

\noindent \underline{Step 2: parameterizing the resonant manifold.}
Setting $p+p_1=\rho$, we now parameterize the resonant manifold $\mathcal{S}_{p,p_1}$. In order to do this, we compute the derivative of $\mathfrak{G}$ 
$$\nabla_z \mathfrak{G} = \frac{z - \rho}{|z-\rho|} \Omega'(|\rho -z|) + \frac{z}{|z|} \Omega'(|z|).$$
In particular, let $q$ be any vector orthogonal to $\rho$ i.e. $\rho \cdot q =0$. The directional derivative of $ \mathfrak{G}$ in the direction of $q$, with  $z=\alpha \rho +q, \alpha\in\mathbb{R}$, satisfies 
\begin{equation*} q \cdot \nabla_z \mathfrak{G} = |q|^2 \Big[ \frac{\Omega'(|\rho -z|)}{|\rho-z|} + \frac{\Omega'(|z|) }{|z|}\Big]   > 0,\end{equation*}
which means that $\mathfrak{G}(z)$ is strictly increasing in any direction that is orthogonal to $\rho$. This proves that the intersection between the surface $\mathcal{S}_{p,p_1}$ and the  plane $$\mathcal{P}_\alpha= \Big\{ \alpha \rho + q, \rho\cdot q = 0\Big\}$$ 
 is either empty or the circle centered at $\alpha \rho$ and of a finite radius $r_\alpha$, for  $\alpha \in \mathbb{R}$.

As a consequence, we can parametrize $\mathcal{S}_{p,p_1}$ as  follows. Let $\rho^\perp$ be the vector orthogonal to both $\rho$ and a fixed vector $e$ of $\mathbb{R}^3$ and let $e_\theta$ be the unit vector in $ \mathcal{P}_0=\{ \rho\cdot q = 0 \}$ such that the angle between $\rho^\perp$ and $e_\theta$ is $\theta$. We parameterize $\mathcal{S}_{p,p_1}$ by (cf. \cite{ToanBinh})
\begin{equation}\label{Para}
\Big\{ z  = \alpha \rho + r_\alpha e_\theta ~:~ \theta \in [0,2\pi], ~\alpha \in A_{p,p_1} \Big\},
\end{equation}
where $A_{p,p_1}$ is the set of $\alpha$ for which a solution to $\mathfrak{G}(z) = 0$ exists. 

We can think of $\mathfrak G$ as a function of  $\alpha$ and $r$: $\mathfrak G = \mathfrak G(r,\alpha)$. We just saw that $\partial_r \mathfrak G > 0$ for $r>0$. Therefore, by the implicit function theorem, the zero set of $\mathfrak{G}$ can be parameterized as
$$
\{ (\alpha, r = r_\alpha), \, \alpha \in  A_{p,p_1}\},
$$
where $\alpha \mapsto r_\alpha$ is a smooth function on $A_{p,p_1}$ vanishing on its boundary. 

Next, we have by definition that $\mathfrak{G}(z_\alpha) =0$ for all $\alpha$ and therefore, keeping $\theta$ fixed,
\begin{equation}
\label{colibri2}
\begin{aligned}
 0 &= \partial_\alpha z_\alpha \cdot \nabla_z \mathfrak{G} =\partial_\alpha z_\alpha \cdot \left(\frac{z_\alpha -\rho}{|z_\alpha -\rho|}\Omega'(|z_\alpha -\rho|)+\frac{z_\alpha}{|z_\alpha|}\Omega'(|z_\alpha|)\right)\\
   &=\partial_\alpha z_\alpha \cdot \left(\frac{z_\alpha}{|z_\alpha -\rho|}\Omega'(|z_\alpha -\rho|)+\frac{z_\alpha}{|z_\alpha|}\Omega'(|z_\alpha|)\right) - \partial_\alpha z_\alpha \cdot \frac{\rho}{|z_\alpha -\rho|}\Omega'(|z_\alpha -\rho|)\\
 & = \frac12 \partial_\alpha |z_\alpha|^2  \Big[ \frac{\Omega'(|\rho -z_\alpha|)}{|\rho-z_\alpha|} + \frac{\Omega'(|z_\alpha|) }{|z_\alpha|}\Big]  - |\rho|^2 \frac{\Omega'(|\rho -z_\alpha|)}{|\rho-z_\alpha|}.
 \end{aligned}\end{equation}
Therefore,
\begin{equation}
\label{colibri}
\partial_\alpha|z_\alpha|^2  = 2
 \frac{ \frac{\Omega'(|\rho -z_\alpha|)}{|\rho-z_\alpha|}|\rho|^2 }{ \frac{\Omega'(|\rho -z_\alpha|)}{|\rho-z_\alpha|} + \frac{\Omega'(|z_\alpha|) }{|z_\alpha|}}.
\end{equation}
This implies in particular that $\alpha \mapsto |z_\alpha|$ is increasing on $A_{p,p_1}$. Defining $r$ to be zero on the complement of $A_{p,p_1}$, we get that $\alpha \mapsto |z_\alpha|$ is an increasing function on $\mathbb{R}$; therefore, the change of coordinates $\alpha \to |z_\alpha|$ is well-defined.

\bigskip
\noindent \underline{Step 3: the surface measure on the resonant manifold} Since $\partial_\theta e_\theta$ is orthogonal to both $\rho$ and $e_\theta$,  we compute the surface area 
\begin{equation}\label{dS}\begin{aligned}
 d \mu (z) &= |\partial_\alpha z \times \partial_\theta z | d\alpha d\theta  = \Big |(\rho+\partial_\alpha r_\alpha e_\theta) \times  r_\alpha \partial_\theta e_\theta \Big | d\alpha d\theta
 \\
 &=
 \sqrt{|\rho|^2  r_\alpha ^2+ \frac14| \partial_\alpha ( r_\alpha^2)|^2}d\alpha d\theta .
 \end{aligned}\end{equation}
Using $|z|^2 = \alpha^2 |\rho|^2 + r_\alpha^2$, we learn from the last line of~\eqref{colibri2} that
 \begin{equation}\label{proposition:T2L2:E8}
\begin{aligned}
  \partial_\alpha  r_\alpha^2  &
 &= 
2|\rho|^2 \frac{\alpha   \frac{\Omega'(|z_\alpha|) }{|z_\alpha|} +(\alpha-1)  \frac{\Omega'(|\rho -z_\alpha|)}{|\rho-z_\alpha|}}{\frac{\Omega'(|\rho -z_\alpha|)}{|\rho-z_\alpha|} + \frac{\Omega'(z_\alpha) }{|z_\alpha|}}.
 \end{aligned}\end{equation}
Now, let us compute $|\nabla_z\mathfrak{G}|$ under the new parameterization:
\begin{equation*}
\begin{aligned}
 |\nabla_z \mathfrak{G}|^2 \ =  &\left|\frac{z_\alpha}{|z_\alpha|}\Omega'(|z_\alpha|)+\frac{z_\alpha -\rho}{|z_\alpha -\rho|}\Omega'(|z_\alpha -\rho|)\right|^2\\
 = & \left|\frac{\alpha \rho+q}{|z_\alpha|}\Omega'(|z_\alpha|)+\frac{(\alpha-1)\rho+q}{|z_\alpha -\rho|}\Omega'(|z_\alpha -\rho|)\right|^2\\
 \ =  &\  |\rho|^2\left[\alpha   \frac{\Omega'(|z_\alpha|) }{|z_\alpha|} +(\alpha-1)  \frac{\Omega'(|\rho -z_\alpha|)}{|\rho-z_\alpha|}\right]^2 + r_\alpha^2 \left[\frac{\Omega'(|\rho -z_\alpha|)}{|\rho-z_\alpha|} + \frac{\Omega'(|z_\alpha|) }{|z_\alpha|}\right]^2.
\end{aligned}
\end{equation*}
In addition to \eqref{proposition:T2L2:E8}, this implies that
\begin{equation}\label{proposition:T2L2:E9}
\begin{aligned}
 |\nabla_z \mathfrak{G}|^2 \ =  &\  \frac{\left|\partial_\alpha r_\alpha^2\right|^2}{4|\rho|^2}\left[\frac{\Omega'(|\rho -z_\alpha|)}{|\rho-z_\alpha|} + \frac{\Omega'(|z_\alpha|) }{|z_\alpha|}\right]^2 + r_\alpha^2 \left[\frac{\Omega'(|\rho -z_\alpha|)}{|\rho-z_\alpha|} + \frac{\Omega'(|z_\alpha|) }{|z_\alpha|}\right]^2.
\end{aligned}
\end{equation}
Therefore
 \begin{equation}\label{proposition:T2L2:E9a}\begin{aligned}
\frac{ d \mu (z)}{ |\nabla_z \mathfrak{G}|} &= \frac{|\rho|}{\frac{\Omega'(|\rho -z_\alpha|)}{|\rho-z_\alpha|} + \frac{\Omega'(|z_\alpha|) }{|z_\alpha|}}\, d\alpha\, d\theta.
 \end{aligned}\end{equation}
Introduce the variable $u = |z_\alpha | = \sqrt{\alpha^2 |\rho|^2 +r_\alpha^2}$ as explained in Step 2; by~\eqref{colibri} we get
$$
\frac{ d \mu (z)}{ |\nabla_z \mathfrak{G}|} = \frac{|\rho-z_\alpha|}{\Omega'(|\rho -z_\alpha|) |\rho|} u\,du \, d\theta.
$$
By Assumption \ref{Assump1}, $\frac{|\rho-z_\alpha|}{\Omega'(|\rho -z_\alpha|)}\lesssim 1,$ and therefore
 \begin{equation}\label{proposition:T2L2:E9b}\begin{aligned}
\frac{ d \mu (z)}{ |\nabla_z \mathfrak{G}|} &\lesssim \frac{u}{|\rho|}du d\theta.
 \end{aligned}\end{equation}

\bigskip

\noindent \underline{Step 4: finiteness of the integral.}
Adopting the coordinates defined above and using \eqref{proposition:T2L2:E9b} yields
\begin{equation*}
\mathfrak{J} \ \lesssim  \ \sup_{p}\int_{\mathbb{R}^3}\frac{\langle p\rangle^{\gamma}}{\langle p_1\rangle^s}\int_0^{\infty}\int_0^{2\pi}{\langle u\rangle^{-s}\frac{|u|}{|\rho|}}\,d\theta\, du\, dp_1,
\end{equation*}
Changing variables $p_1\to \rho=p+p_1$, this becomes
\begin{equation*}
\mathfrak{J} \  \lesssim \ \sup_{p}\int_{\mathbb{R}^3}\frac{\langle p\rangle^{\gamma}}{\langle \rho -p \rangle^s}\int_0^{\infty}\int_0^{2\pi}{\langle u\rangle^{-s}\frac{|u|}{|\rho|}}\,d\theta  \, du\, d\rho.
\end{equation*}
Performing the integrations in $z$ and $\theta$, this leads to
\begin{equation*}
\mathfrak{J} \ \lesssim\sup_{p}\int_{\mathbb{R}^3}\frac{\langle p\rangle^{\gamma}}{\langle \rho -p \rangle^s}\frac{1}{|\rho|}d\rho.
\end{equation*}
Writing $\rho=|\rho|\sigma	$ where $\sigma	\in\mathbb{S}^2$ and using the inequality\footnote{In order to prove this inequality, simply observe that
$$\int_{\mathbb{S}^2}\frac{1}{\langle A+r\sigma	 \rangle^{s}} d\sigma = \int_0^{\pi} \frac{\sin \phi}{(|A|^2 + r^2 - 2|A|r \cos \phi + 1)^{s/2}}\,d\phi.$$ The main contribution is
$$
\int_0^{\pi/2} \frac{\sin \phi}{(|A|^2 + r^2 - 2|A|r \cos \phi + 1)^{s/2}}\,d\phi = \int_0^1 \frac{dt}{(\langle |A|-r \rangle^2 + 2 |A|r t )^{s/2}}.
$$}
\begin{equation}
\label{Inequality}
\int_{\mathbb{S}^2}\frac{1}{\langle A+r\sigma	 \rangle^{s}} d\sigma \lesssim \langle |A|-r\rangle^{2-s}\langle r\rangle^{-2} \qquad \forall A\in\mathbb{R}^3, r>0, s>2,
\end{equation}
we get 
\begin{equation*}
\mathfrak{J} \ \lesssim \ \sup_{p}\int_{0}^\infty\frac{|\rho|\langle p\rangle^{\gamma}}{\langle |\rho| - |p| \rangle^{s-2}\langle \rho\rangle^2}d|\rho|,
\end{equation*}
which is bounded when $s>2$ and $0\le \gamma <\min(s-2,1)$.
\end{proof}

\subsection{Boundedness of $\mathcal{T}_2$}\label{Sec:T2}

\begin{proposition}\label{proposition:T2}
For $s>2$ and $0\le \gamma < s-2$, and under Assumption~\ref{Assump1}, the operator~$\mathcal{T}_2$ is bounded from $(L^{\infty}_s)^3$ to $L^{\infty}_{s+\gamma}$.
\end{proposition}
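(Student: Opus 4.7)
The overall strategy is to reduce the claim to a kernel estimate and then mirror the analysis of Proposition~\ref{proposition:T1}. The key simplification in the $\mathcal{T}_2$ case is that the argument $f$ enters $\mathcal{T}_2(f,g,h)(p)$ only through the factor $f(p)$, which is constant with respect to the integration variables and can be pulled outside the integral. Bounding $|f(p)|\leq \|f\|_{L^\infty_s}\langle p\rangle^{-s}$ and handling $g,h$ in the same way reduces the problem to showing
\[
\mathfrak{J}_2 := \sup_{p\in\mathbb{R}^3}\langle p\rangle^\gamma \iiint_{\mathbb{R}^9}\delta(p+p_1-p_2-p_3)\,\delta(\omega+\omega_1-\omega_2-\omega_3)\,\frac{dp_1\,dp_2\,dp_3}{\langle p_2\rangle^s\langle p_3\rangle^s} < \infty.
\]
Compared with the corresponding integral in the proof of Proposition~\ref{proposition:T1}, only two weight factors appear (there is no $\langle p_1\rangle^{-s}$), which is exactly what will permit dropping the auxiliary constraint $\gamma<1$.

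The next step is to integrate out the momentum delta by enforcing $p_1=p_2+p_3-p$, so the only remaining constraint is the energy delta $\delta(\mathfrak{G})$ with
\[
\mathfrak{G}(p_2,p_3;p) = \omega(p_2+p_3-p)+\omega(p_2)-\omega(p)-\omega(p_3).
\]
Fixing $p$ and $p_3$ and writing $z:=p_2$, $\rho:=p_3-p$, this becomes $\mathfrak{G}(z)=\omega(z+\rho)+\omega(z)-C$, which by radial symmetry of $\omega$ has the same rotation invariance about $\rho$ and the same algebraic form as the function analyzed in Steps~2--3 of the previous proof (with $p_3-p$ now playing the role of $p+p_1$). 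I therefore plan to import verbatim the parameterization used there; its key output is the surface-measure bound
\[
\frac{d\mu(z)}{|\nabla_z\mathfrak{G}|}\lesssim \frac{u}{|\rho|}\,du\,d\theta,\qquad u=|z|.
\]

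Applying this bound and carrying out the $\theta$- and $u$-integrations, the integral in $p_2$ is controlled by $|p_3-p|^{-1}\int_0^\infty u\langle u\rangle^{-s}\,du \lesssim |p_3-p|^{-1}$, valid for $s>2$, leaving
\[
\mathfrak{J}_2 \lesssim \sup_p \langle p\rangle^\gamma \int_{\mathbb{R}^3}\frac{dp_3}{\langle p_3\rangle^s |p_3-p|}.
\]
Writing $p_3=r\sigma$ and invoking Newton's shell identity $\int_{\mathbb{S}^2}|r\sigma-p|^{-1}\,d\sigma = 4\pi/\max(r,|p|)$ (the simpler $\mathcal{T}_2$-analog of inequality~(\ref{Inequality}) from the previous proof), the remaining radial integral $\int_0^\infty r^2\langle r\rangle^{-s}\max(r,|p|)^{-1}\,dr$ is elementary and gives $\lesssim \langle p\rangle^{2-s}$ for $s>2$. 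Hence $\mathfrak{J}_2 \lesssim \sup_p\langle p\rangle^{\gamma-(s-2)}$, finite precisely when $\gamma<s-2$.

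The hard part is essentially organizational rather than computational: one must recognize that, after eliminating $p_1$, the resonant set $\mathcal{S}_{p,p_3}$ has exactly the same rotationally-symmetric ``circles in planes perpendicular to $\rho$'' structure as $\mathcal{S}_{p,p_1}$ did in the previous proof, so that the delicate parameterization and the associated surface-measure computation can be transported without modification. Once this is granted, the angular and radial integrations in $p_3$ are strictly easier than their counterparts for $\mathcal{T}_1$, and no case splitting (e.g.\ the dichotomy $\omega\le 2\omega_2$ vs.\ $\omega\le 2\omega_3$ used in the $\mathcal{T}_1$ analysis) is required.
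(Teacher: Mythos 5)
Your reduction to the weighted integral $\mathfrak{J}_2$ is fine, but the step where you eliminate $p_1$ contains a sign error that hides a genuine structural problem. With $p_1=p_2+p_3-p$, the energy constraint becomes $\omega(p)+\omega(p_2+p_3-p)-\omega(p_2)-\omega(p_3)=0$, i.e.\ with $z=p_2$, $\rho=p_3-p$ it reads $\Omega(|z+\rho|)-\Omega(|z|)=\omega(p_3)-\omega(p)$, a \emph{difference} of dispersion values, not the sum $\Omega(|z+\rho|)+\Omega(|z|)=C$ you wrote. The parameterization of Steps 2--3 of the $\mathcal{T}_1$ proof cannot be imported verbatim for this surface: there the transverse derivative $|q|^2\bigl[\tfrac{\Omega'(|\rho-z|)}{|\rho-z|}+\tfrac{\Omega'(|z|)}{|z|}\bigr]$ is strictly positive, the sections are compact circles, and the positivity of both terms is what gives the monotonicity \eqref{colibri} and the bound \eqref{proposition:T2L2:E9b}. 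For the difference surface the corresponding quantity is $|q|^2\bigl[\tfrac{\Omega'(|z+\rho|)}{|z+\rho|}-\tfrac{\Omega'(|z|)}{|z|}\bigr]$, which is not sign-definite (in the Schr\"odinger case it vanishes identically and the resonant set is a plane, not a family of circles), the change of variables $\alpha\mapsto|z_\alpha|$ need not be admissible, and no surface-measure bound is established for general $\Omega$ satisfying Assumption~\ref{Assump1}. The paper sidesteps this entirely by eliminating $p_3$ instead, keeping the weightless variable $p_1$ outside, so that the inner surface is literally the same $\mathcal{S}_{p,p_1}$ as in the $\mathcal{T}_1$ analysis, with both weights $\langle z\rangle^{-s}\langle\rho-z\rangle^{-s}$ inside the surface integral.

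Separately, even if your kernel bound were granted, the final estimate does not give the claimed range of $\gamma$. The radial integral $\int_0^\infty r^2\langle r\rangle^{-s}\max(r,|p|)^{-1}\,dr$ is comparable to $\langle p\rangle^{2-s}$ only for $2<s<3$; for $s>3$ it is comparable to $\langle p\rangle^{-1}$ (with a logarithm at $s=3$), because the region $|p_3|\lesssim 1$ alone contributes $\approx |p|^{-1}$. Hence your argument yields $\mathfrak{J}_2\lesssim\sup_p\langle p\rangle^{\gamma-\min(s-2,1)}$, i.e.\ only $\gamma<\min(s-2,1)$ --- the same restricted range as Proposition~\ref{proposition:T1} --- whereas Proposition~\ref{proposition:T2} asserts the full range $\gamma<s-2$. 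To reach the full range one must exploit the resonance to control $\langle p\rangle^\gamma$ before integrating: $\omega(p)\le\omega(p_2)+\omega(p_3)$ together with Assumption~\ref{Assump1}(iv) gives $|p|\lesssim|p_2|$ or $|p|\lesssim|p_3|$, so the factor $\langle p\rangle^\gamma$ can be absorbed into one of the two weights, leaving exponents $s_1,s_2>2$. This is exactly the dichotomy you declared unnecessary; dropping it is why the decay in $p$ saturates at $\langle p\rangle^{-1}$ in your computation.
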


\begin{proof}\underline{Step 1: reduction to the boundedness of $\mathcal{Q}_1$.} Defining
 \begin{equation}\label{proposition:T2L2:E2}
\mathcal{Q}_1(g,h)(p) \ = \ \iint_{\mathbb{R}^{6}}\delta(\omega +\omega_1-\omega_2-\omega(p+p_1-p_2))g(p_2)h(p+p_1-p_2) \, dp_1 \, dp_2,
\end{equation}
it suffices to prove that
$$\|\mathcal{Q}_1(g,h)\|_{L^\infty_\gamma}\lesssim \ \|g\|_{L^\infty_{s}}\|h\|_{L^\infty_{s}},$$
Taking the $L^\infty$ norm of $\mathcal{Q}_1(g,h)$, we obtain
 \begin{equation}\label{proposition:T2L2:E2}
 \begin{aligned}
\|\mathcal{Q}_1(g,h)\|_{L^\infty_\gamma}  &  \leq  \sup_{p\in\mathbb{R}^3}\|g\|_{L^\infty_s}\|h\|_{L^\infty_s}\times\\
& \times \iint_{\mathbb{R}^{6}}\delta(\omega+\omega_1-\omega_2-\omega(p+p_1-p_2))\langle p_2\rangle^{-s} \langle p+p_1-p_2\rangle^{-s}\langle p\rangle^{\gamma}\,dp_1 \, dp_2. 
\end{aligned}
\end{equation}

\bigskip

\noindent \underline{Step 2: upper bound  on $|p|$.} Keeping the notations of Section~\ref{Sec:T1}, we deduce from the inequalities
$\omega(p) + \omega(p_1) = \omega(p_2) + \omega(p_3)$ and $p + p_1 = p_2 + p_3$ that
$$
\omega(p) \leq \omega(z) + \omega(\rho-z).
$$
We now use Assumption~\ref{Assump1} to bound
$$
\omega(p) \leq \omega(z) + \omega(\rho-z) \leq 2 \Omega \left( \max(|\rho|,|\rho-z|) \right) \leq \Omega(c_2 \max(|\rho|,|\rho-z|)).
$$
Since $\Omega$ is increasing, this implies that
$$
|p| \lesssim |\rho| + |\rho-z|.
$$

\bigskip

\noindent \underline{Step 3: parameterizing the integral.} Adopting the same parameterization as in Section~\ref{Sec:T1}, it appears that \eqref{proposition:T2L2:E2} would follow from a bound on 
 \begin{equation}\label{proposition:T2L2:E3a}
\sup_{p\in\mathbb{R}^3} \int_{\mathbb{R}^{3}}\left(\int_{\mathcal{S}_{p,p_1}}\frac{\langle p\rangle^{\gamma}\langle z\rangle^{-s} \langle p+p_1-z\rangle^{-s}}{|\nabla_z \mathfrak{G}(z)|}d\mu(z)\right)dp_1.
\end{equation}
By the parametrization \eqref{Para} and Step 2, matters reduce to bounding
\begin{equation*}
\sup_{p\in\mathbb{R}^3} \int_{\mathbb{R}^3}\int_0^\infty\int_0^{2\pi} \mathbf{1}_{|p| \lesssim |\rho| + |\rho-z|} \frac{\langle p\rangle^{\gamma} |z|}{\langle z\rangle^{s}\langle \rho-z\rangle^{s}|\rho|} \, d\theta \, d|z| \, dp_1,
\end{equation*}
where $\mathbf{1}_{|p| \lesssim |\rho| + |\rho-z|}$ is the characteristic function of $\{|p| \lesssim |\rho| + |\rho-z| \}$. On the one hand, integrating in $\theta$ is harmless; and on the other hand, in the above integral, either $|p| \lesssim |z|$ or $|p| \lesssim |\rho-z|$. Therefore, it suffices to bound
\begin{equation*}
\sup_{p\in\mathbb{R}^3} \int_{\mathbb{R}^3}\int_0^\infty  \frac{ |z|}{\langle z\rangle^{s_1}\langle \rho-z\rangle^{s_2}|\rho|} \, d|z| \, dp_1,
\end{equation*}
where $s_1,s_2 > 2$. Changing variables from $p_1$ to $\rho$, this becomes
 \begin{equation*}
\sup_{p\in\mathbb{R}^3} \int_{\mathbb{R}^3}\int_0^{\infty}\frac{|z|}{\langle z\rangle^{s_1}\langle \rho-z\rangle^{s_2}|\rho|}d|z| d\rho .
\end{equation*}
Writing $\rho$ as $|\rho|\omega$ and using \eqref{Inequality}, we obtain  that the above is bounded by
 \begin{equation}\label{proposition:T2L2:E12}
 \begin{aligned}
\sup_{p\in\mathbb{R}^3} \int_0^{\infty}\int_0^{\infty}\frac{|z||\rho|}{\langle z\rangle^{s_1}\langle |\rho|-|z|\rangle^{s_2-2}\langle \rho \rangle^2}d|z| d|\rho| ,
\end{aligned}
\end{equation}
which is finite for $s_1,s_2>2$. This is the desired result!
\end{proof}

\subsection{Boundedness of $\mathcal{T}_3$}\label{Sec:T3}

\begin{proposition}\label{proposition:T3}
For $s>2$ and $0\le \gamma < s-2 $, and under Assumption~\ref{Assump1}, the operator~$\mathcal{T}_3$ is bounded from $(L^{\infty}_s)^3$ to $L^{\infty}_{s+\gamma}$.
\end{proposition}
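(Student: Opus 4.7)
The plan is to mirror the proof of Proposition~\ref{proposition:T1} for $\mathcal{T}_1$; in fact, $\mathcal{T}_3$ is a technically simpler variant, because $f$ is evaluated at the external variable $p$ and hence factors out of the collision integral. This replaces the energy-based absorption step used for $\mathcal{T}_1$ by the trivial pointwise bound $|f(p)|\leq\|f\|_{L^\infty_s}\langle p\rangle^{-s}$. My first move will be to pull $f(p)$ out of $\mathcal{T}_3(f,g,h)(p)$ and apply pointwise bounds on $f,g,h$ in their $L^\infty_s$ norms to reduce the proposition to the finiteness of
\[
\mathfrak{K} := \sup_{p\in\mathbb{R}^3}\langle p\rangle^{\gamma}\int_{\mathbb{R}^9}\delta(p+p_1-p_2-p_3)\,\delta(\omega+\omega_1-\omega_2-\omega_3)\,\langle p_1\rangle^{-s}\langle p_2\rangle^{-s}\,dp_1\,dp_2\,dp_3.
\]
Notice that $\mathfrak{K}$ coincides with the integral obtained at the end of Step~1 of the proof of Proposition~\ref{proposition:T1} after one uses $\omega(p)\leq 2\omega(p_3)$ to absorb $\langle p\rangle^{s}$ into $\langle p_3\rangle^{-s}$; here the factor $\langle p\rangle^{-s}$ is already supplied by $f(p)$, so no such absorption is needed.

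Next I would integrate out $p_3=p+p_1-p_2$ using the momentum delta, and then apply Steps~2 and~3 of the proof of Proposition~\ref{proposition:T1} verbatim. Setting $\rho=p+p_1$ and $z=p_2$, one parameterizes the zero set of $\mathfrak{G}(z)=\omega(p)+\omega(p_1)-\omega(z)-\omega(\rho-z)$ by $(u,\theta)\in\mathbb{R}_+\times[0,2\pi]$ with $u=|z|$, and uses the pointwise bound \eqref{proposition:T2L2:E9b} on $d\mu(z)/|\nabla_z\mathfrak{G}|$. Changing variables $p_1\mapsto\rho$, integrating $\theta$ trivially, and using that $\int_0^\infty u\langle u\rangle^{-s}\,du<\infty$ for $s>2$, one is left with
\[
\mathfrak{K}\ \lesssim\ \sup_p\langle p\rangle^{\gamma}\int_{\mathbb{R}^3}\frac{d\rho}{\langle \rho-p\rangle^{s}|\rho|}.
\]

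To conclude, I would write $\rho=|\rho|\sigma$ with $\sigma\in\mathbb{S}^2$ and invoke the spherical inequality \eqref{Inequality} with $A=-p$ and $r=|\rho|$, reducing the task to the finiteness of
\[
\sup_p\langle p\rangle^{\gamma}\int_0^\infty\frac{|\rho|}{\langle|\rho|-|p|\rangle^{s-2}\langle|\rho|\rangle^{2}}\,d|\rho|,
\]
which is precisely the one-dimensional integral whose boundedness concludes the proof of Proposition~\ref{proposition:T1} in the admissible range of $s$ and $\gamma$. I do not foresee any significant new obstacle: every analytical ingredient---the resonant-manifold parameterization, the surface-measure bound \eqref{proposition:T2L2:E9b}, and the spherical averaging inequality \eqref{Inequality}---has already been developed for $\mathcal{T}_1$ and $\mathcal{T}_2$. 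The only conceptual simplification here is the initial factorization of $f(p)$, which removes the need for the WLOG absorption step.
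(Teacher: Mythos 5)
Your proposal is correct and essentially identical to the paper's own proof: the paper likewise factors out $f(p)$ (reducing $\mathcal{T}_3$ to the bilinear operator $\mathcal{Q}_2$ of \eqref{Q2}), reuses the resonant-manifold parameterization and the surface-measure bound \eqref{proposition:T2L2:E9b} from Proposition~\ref{proposition:T1}, changes variables $p_1\mapsto\rho$, integrates out $\theta$ and $|z|$ using $s>2$, and concludes with the spherical inequality \eqref{Inequality} and the same one-dimensional integral. The only caveat (shared with the paper's own last line) is that this final integral is bounded precisely for $0\le\gamma<\min(s-2,1)$, as in Proposition~\ref{proposition:T1}, so the argument covers the full stated range $\gamma<s-2$ only when $s\le 3$; this is harmless for Proposition~\ref{PropLinfty}, which only requires $\gamma<\min(s-2,1)$.
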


\begin{proof} Defining
\begin{equation}\label{Q2}
\mathcal{Q}_2(g,h) \  = \  \int_{\mathbb{R}^6}\delta(\omega +\omega_1 -\omega_2 -\omega(p+p_1-p_2)) g(p_1)h(p_2) \, dp_1 \,dp_2,
\end{equation}
it suffices to show that
$$\|\mathcal{Q}_2(g,h)\|_{L^\infty_\gamma} \lesssim \|g\|_{L^\infty_{s}}\|h\|_{L^\infty_{s}}.$$
Similarly to Section \ref{Sec:T2}, we set $\rho=p+p_1$, and define $\mathfrak{G}$ and $\mathcal{S}_{p,p_1}$. 
Proceeding as in the proof of Proposition~\ref{proposition:T1}, it suffices to prove the boundedness of
\begin{equation}\label{Sec:T3:E9}
\begin{aligned}
 \mathcal{J} =  \ \sup_{p\in\mathbb{R}^3} \int_{\mathbb{R}^3}\langle p_1\rangle^{-s}\langle p\rangle^{\gamma} \int_0^{\infty}\int_0^{2\pi}{\langle z\rangle^{-s}\frac{|z|}{|\rho|}}\, d\theta \, d|z| \, dp_1 .
\end{aligned}
\end{equation}
Setting $\rho = p + p_1$, this becomes
\begin{equation}\label{Sec:T3:E9aa}
\begin{aligned}
 \mathcal{J} \ & =  \ \sup_{p\in\mathbb{R}^3} \int_{\mathbb{R}^3}\langle \rho - p\rangle^{-s}\langle p\rangle^{\gamma} \int_0^{\infty}\int_0^{2\pi}{\langle z\rangle^{-s}\frac{|z|}{|\rho|}}\,d\theta \, d|z| \, d\rho \\
 & \lesssim  \ \sup_{p\in\mathbb{R}^3} \int_{\mathbb{R}^3}\int_0^{\infty}\langle \rho - p\rangle^{-s}\langle p\rangle^{\gamma}{\langle z\rangle^{-s}\frac{|z|}{|\rho|}} \, d|z| \, d\rho \\
 & \lesssim  \ \sup_{p\in\mathbb{R}^3} \int_{\mathbb{R}^3}\langle \rho - p\rangle^{-s}\langle p\rangle^{\gamma}{\frac{1}{|\rho|}} \, d\rho ,
\end{aligned}
\end{equation}
 where the last inequality is due to the fact that $s>2$.
 
 Writing $\rho$ as $|\rho|\omega$ and using \eqref{Inequality}, we obtain

\begin{equation}\label{Sec:T3:E9aaa}
\begin{aligned}
 \mathcal{J}
 & \lesssim  \ \sup_{p\in\mathbb{R}^3} \int_0^{\infty}{\frac{\langle p\rangle^{\gamma}|\rho|}{\langle\rho\rangle^2\langle |\rho| - |p|\rangle^{s-2}}}d|\rho| ,
\end{aligned}
\end{equation}
which is bounded when $s>2$ and $0\le \gamma< s-2$.
\end{proof}

\section{Proof of Proposition~\ref{PropL2}: $L^2_s (s>1/2)$ boundedness of $\mathcal{T}_j$}\label{Proof4}

In this section we assume that $\omega(p)=|p|^2$ and prove the $L^2_s$ bounds in Proposition~\ref{PropL2}. One can think of the operators $\mathcal{T}_j$ as bilinear and trilinear operators defined by integrating along moving surfaces in Euclidean spaces. Such operators are called Radon transforms, and their boundedness properties have been studied extensively in Harmonic Analysis (see for example the classical papers \cite{Seeger:RTA:1998,sogge1985averages,SoggeStein:AOH:1990}).

One of the main ideas in the study of Radon transforms on Euclidean spaces is the use of $TT^\ast$ arguments. We adapt this technique in our setting to bound the trilinear operators $\mathcal{T}_j$. We remark that  $TT^\ast$ arguments are usually optimal if one uses $L^2$ based spaces; this is the main reason for choosing the spaces $L^2_s$ as the local well-posedness spaces in Theorem \ref{theorem:main2}.

\subsection{The operator $\mathcal{T}_1$}

We consider first the trilinear operator $\mathcal{T}_1$ and we prove the following:

\begin{lemma}\label{Lem1}
If $s>1/2$ and $\mathcal{T}_1$ is defined as in \eqref{Operators} then
\begin{equation}\label{ma5}
\|\mathcal{T}_1\|_{L^2_s\times L^2_s\times L^2_s\to L^2_s}\lesssim_s 1.
\end{equation}
\end{lemma}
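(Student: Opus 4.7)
The plan is a $TT^\ast$/bilinear Strichartz argument, exploiting the fact that for $\omega(p)=|p|^2$ the resonant manifold $\{p+p_1=p_2+p_3,\ |p|^2+|p_1|^2=|p_2|^2+|p_3|^2\}$ coincides with the characteristic paraboloid of the free Schr\"odinger equation; consequently the quadrilinear form $\langle\mathcal{T}_1(f,g,h),k\rangle_{L^2}$ admits a space-time representation as an integral of four free Schr\"odinger waves. First one redistributes the weight. Since energy conservation gives $|p|^2\le|p_2|^2+|p_3|^2$ on the resonant manifold, we have $\langle p\rangle^s\lesssim\langle p_2\rangle^s+\langle p_3\rangle^s$ for every $s\ge 0$. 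Distributing this inside $\mathcal{T}_1$ (assuming nonnegative inputs) and exploiting the symmetry $g\leftrightarrow h$ of $\mathcal{T}_1$ (swap of $p_2,p_3$), the estimate \eqref{ma5} is reduced to a ``one unweighted slot'' estimate
\begin{equation}\label{planA}
\|\mathcal{T}_1(f,\phi,h)\|_{L^2}\lesssim\|f\|_{L^2_s}\|\phi\|_{L^2}\|h\|_{L^2_s},\qquad f,h\in L^2_s,\ \phi\in L^2.
\end{equation}

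Dualizing \eqref{planA} against $k\in L^2$, writing each Dirac mass as an oscillatory integral ($\delta(\xi)=(2\pi)^{-3}\int e^{ix\cdot\xi}dx$, $\delta(\tau)=(2\pi)^{-1}\int e^{it\tau}dt$), and introducing, for a real function $u$, the free-Schr\"odinger extension
$$\Psi_u(t,x):=\int u(q)\,e^{i(x\cdot q-t|q|^2)}\,dq=(2\pi)^3 e^{it\Delta}\check u(x),$$
one obtains the spacetime identity
$$\int\mathcal{T}_1(f,\phi,h)(p)\,k(p)\,dp=(2\pi)^{-4}\int_{\mathbb{R}\times\mathbb{R}^3}\Psi_f\,\Psi_k\,\overline{\Psi_\phi\,\Psi_h}\,dx\,dt.$$
Cauchy--Schwarz in $(t,x)$ now pairs a weighted with an unweighted argument in each factor:
$$\Big|\int\mathcal{T}_1(f,\phi,h)\,k\,dp\Big|\le\|\Psi_f\,\overline{\Psi_\phi}\|_{L^2_{t,x}}\,\|\Psi_k\,\overline{\Psi_h}\|_{L^2_{t,x}}.$$
Each factor is a product of opposite-sign Schr\"odinger waves, so the sharp bilinear Strichartz estimate on $\mathbb{R}^{1+3}$,
$$\|e^{it\Delta}u_0\cdot\overline{e^{it\Delta}v_0}\|_{L^2_{t,x}}\lesssim_s\|u_0\|_{H^s}\|v_0\|_{L^2},\qquad s>\tfrac12,$$
with $f,h$ in the $H^s$-slot and $\phi,k$ in the $L^2$-slot, together with $\|\check u\|_{H^s}=\|u\|_{L^2_s}$, yields \eqref{planA} and hence \eqref{ma5}.

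The crux is the bilinear Strichartz step, which from the $TT^\ast$ viewpoint of the paper is precisely the $L^2\to L^2$ bound, with one half derivative loss, for the Radon transform integrating along the resonant surface $\{v\cdot w=0\}$ in the parametrization $v=p_2-p,\ w=p_3-p$. I expect this to be proven directly in the paper by Plancherel in the spacetime Fourier variables and the coarea formula on the fibres $\{\tau+|\eta|^2+|\xi-\eta|^2=0\}$, exploiting the transversality of the two intersecting paraboloids (equivalently, of the two spheres comprising the resonant surface). The $s>1/2$ threshold is exactly the subcritical margin above the scaling-invariant regularity $s=1/2$ for \eqref{scaling} and is sharp: at $s=1/2$ the bilinear $L^2_{t,x}$ bound becomes scale-critical and the fibre integration in the coarea argument degenerates.
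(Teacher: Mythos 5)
Your argument is essentially correct and, at its core, close to the paper's: both proofs represent $\mathcal{T}_1$ through free Schr\"odinger waves and reduce matters to a Strichartz-type estimate at regularity $s>1/2$. The organization differs. The paper keeps the weight $\langle p\rangle^s$ on the output, bounds $\|\mathcal{T}_1\|_{L^2_s}$ by $\int_{\mathbb{R}}\|G(\cdot,t)\|_{H^s}\,dt$ for the triple product $G$ of free waves, splits that product by a Kato--Ponce/fractional Leibniz inequality, and reduces everything to the single weighted estimate \eqref{ma10}, $\|Lb\|_{L^2_tL^\infty_x}\lesssim_s\|b\|_{L^2_s}$, proved by a $TT^\ast$ argument and the dispersive kernel bound \eqref{ma13}. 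You instead redistribute the weight via $|p|^2\le |p_2|^2+|p_3|^2$ on the resonant set (valid here since $\omega(p)=|p|^2$, and legitimate after reducing to nonnegative inputs because the kernel is a positive measure), dualize, apply Cauchy--Schwarz in $(t,x)$, and invoke the asymmetric bilinear estimate $\|e^{it\Delta}u\,\overline{e^{it\Delta}v}\|_{L^2_{t,x}}\lesssim_s\|u\|_{H^s}\|v\|_{L^2}$ for $s>1/2$ in $d=3$. That estimate is true: it follows from H\"older ($L^2_tL^\infty_x\times L^\infty_tL^2_x$) combined with exactly \eqref{ma10}, or alternatively from the endpoint $L^2_tL^6_x$ Strichartz estimate plus Sobolev embedding. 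So your route buys a shorter, duality-based derivation, at the price of outsourcing the analytic crux to a quoted estimate; the paper's route is self-contained and isolates that crux as a linear $L^2_tL^\infty_x$ bound.

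One caveat deserves emphasis. The bilinear estimate you cite is precisely where the work lies, and your sketched justification of it would not go through as stated: after your Cauchy--Schwarz step the relevant object is the \emph{conjugated} product $e^{it\Delta}u\,\overline{e^{it\Delta}v}$, whose space-time Fourier fibres are the hyperplanes $\{\tau=2\eta\cdot\xi-|\xi|^2\}$, not the spheres associated with $\{\tau+|\eta|^2+|\xi-\eta|^2=0\}$; for comparable, nearly parallel frequencies there is no transversality gain (two wave packets travelling together overlap for unit time), and a naive coarea/Cauchy--Schwarz on these fibres produces trace-type terms on planes that are not controlled by $\|u\|_{H^s}$ or $\|v\|_{L^2}$. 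The clean proofs of your bilinear input are the ones indicated above, i.e.\ exactly the weighted $L^2_tL^\infty_x$ estimate \eqref{ma10} that the paper establishes via $TT^\ast$ and \eqref{ma13}. A minor additional point: to bound $\|\Psi_k\overline{\Psi_h}\|_{L^2_{t,x}}$ you need the weighted function in the conjugated slot, which is fine after conjugating the whole product, but should be said. With the bilinear estimate granted, your remaining steps (weight redistribution, the space-time identity, Cauchy--Schwarz, and the duality reduction) are correct.
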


\begin{proof} We adapt an argument from \cite{buckmaster2016analysis}. We start from the identity
\begin{equation*}
\delta(q)=\frac{1}{2\pi}\int_{\mathbb{R}}e^{iq\xi}\,d\xi.
\end{equation*}
For simplicity of notation, let $Q:=\mathcal{T}_1[f,g,h]$. We have
\begin{equation*}
\begin{split}
Q(p)&=C\int_{(\mathbb{R}^d)^3\times\mathbb{R}\times\mathbb{R}}e^{iy\cdot(p+p_1-p_2-p_3)}e^{it(\omega(p)+\omega(p_1)-\omega(p_2)-\omega(p_3))}f(p_1)g(p_2)h(p_3)\,dp_1 dp_2 dp_3 dt dy\\
&=C\int_{\mathbb{R}\times\mathbb{R}}e^{iy\cdot p}e^{it\omega(p)}\overline{L\overline{f}}(y,t)Lg(y,t)Lh(y,t)\,dt dy,
\end{split}
\end{equation*}
where
\begin{equation}\label{ma7}
La(x,t):=\int_{\mathbb{R}^d}a(q)e^{-iq\cdot x}e^{-i\omega(q)t}\,dq.
\end{equation}
Therefore, with $G(y,t):=\overline{L\overline{f}}(y,t)Lg(y,t)Lh(y,t)$,
\begin{equation*}
\|\langle p\rangle ^sQ(p)\|_{L^2}\lesssim \Big\|\int_{\mathbb{R}}\mathcal{F}^{-1}(G)(p,t)e^{it\omega(p)}\langle p\rangle ^s\,dt\Big\|_{L^2}\lesssim \int_{\mathbb{R}}\|G(.,t)\|_{H^s}\,dt,
\end{equation*}
where $H^s$ denote the usual Sobolev spaces on $\mathbb{R}^3$. Notice that, for any $t\in\mathbb{R}$,
\begin{equation*}
\|G(.,t)\|_{H^s}\lesssim \sum_{\{a,b,c\}=\{f,g,h\}}\|La(.,t)\|_{H^s}\|Lb(.,t)\|_{L^\infty}\|Lc(.,t)\|_{L^\infty}.
\end{equation*}
Moreover, for any $a\in\{f,g,h\}$,
\begin{equation*}
\sup_{t\in\mathbb{R}}\|La(.,t)\|_{H^s}\lesssim \|a\|_{L^2_s}.
\end{equation*}
In view of the last three inequalities, for \eqref{ma5} it suffices to prove the linear estimates
\begin{equation}\label{ma10}
\Big[\int_{\mathbb{R}}\|Lb(.,t)\|_{L^\infty}^2\,dt\Big]^{1/2}\lesssim_s\|b\|_{L^2_s}
\end{equation}
for any $s>1/2$ and $b\in L^2_s$.

The estimates \eqref{ma10} are Strichartz-type linear estimates. To prove them we use a $TT^\ast$-type argument. We may assume that $\|b\|_{L^2_s}=1$ and $b(p)=h(p)\langle p\rangle ^{-s}$, $\|h\|_{L^2}=1$. For \eqref{ma10} it suffices to show that
\begin{equation*}
\Big|\int_{\mathbb{R}^3\times\mathbb{R}}Lb(x,t)F(x,t)\,dxdt\Big|\lesssim_s 1
\end{equation*}
provided that $\big[\int_{\mathbb{R}}\|F(.,t)\|_{L^1}^2\,dt\big]^{1/2}\lesssim 1$. Using \eqref{ma7}, this is equivalent to proving that
\begin{equation*}
\Big\|\langle p\rangle ^{-s}\int_{\mathbb{R}^3\times\mathbb{R}}F(x,t)e^{-ip\cdot x}e^{-it\omega(p)}\,dxdt\Big\|_{L^2}\lesssim_s 1,
\end{equation*}
where the $L^2$ norm is taken in the $p$ variable. Expanding the $L^2$ norm in $p$, this is equivalent to showing that
\begin{equation}\label{ma11}
\Big|\int_{\mathbb{R}^3}\int_{\mathbb{R}^3\times\mathbb{R}}\int_{\mathbb{R}^3\times\mathbb{R}}\langle p\rangle ^{-2s}F(x,t)e^{-ip\cdot x}e^{-it\omega(p)}\overline{F(x',t')}e^{ip\cdot x'}e^{it'\omega(p)}\,dxdtdx'dt'dp\Big|\lesssim_s 1.
\end{equation} 
Let
\begin{equation}\label{ma12}
K(y,t):=\int_{\mathbb{R}^3}\langle p\rangle ^{-2s}e^{-ip\cdot y}e^{-it\omega(p)}\,dp,
\end{equation}
so the left-hand side of \eqref{ma11} is bounded by
\begin{equation*}
C\int_{\mathbb{R}^3\times\mathbb{R}}\int_{\mathbb{R}^3\times\mathbb{R}}|F(x,t)||F(x',t')||K(x-x',t-t')|\,dxdtdx'dt'.
\end{equation*}
Since $\big[\int_{\mathbb{R}}\|F(.,t)\|_{L^1}^2\,dt\big]^{1/2}\lesssim 1$, and recalling that $s>1/2$, for \eqref{ma10} it suffices to prove that there is $\delta=\delta(s)>0$ such that
\begin{equation}\label{ma13}
|K(y,t)|\lesssim_\delta |t|^{-1+\delta}\langle t\rangle^{-2\delta},
\end{equation}
for any $(y,t)\in\mathbb{R}^3\times\mathbb{R}$. Recalling that $\omega(p)=|p|^2$, this is a standard dispersive bound on the kernel of the Schr\"{o}dinger evolution and can be proved by oscillatory integral estimates.
\end{proof}

\subsection{The operator $\mathcal{T}_2$}

Notice that $\mathcal{T}_2(f,g,h)=f\cdot Q_2(g,h)$ where, by definition,
\begin{equation}\label{bil1}
\begin{split}
Q_2(F,G)(p)&:=\int_{\mathbb{R}^3\times\mathbb{R}^3\times\mathbb{R}^3}\delta(p+p_1-x-y)\delta(\omega(p)+\omega(p_1)-\omega(x)-\omega(y))F(x)G(y)\,dxdy dp_1\\
&=\int_{\mathbb{R}^3\times\mathbb{R}^3}\delta(\omega(p)+\omega(x+y-p)-\omega(x)-\omega(y))F(x)G(y)\,dxdy.
\end{split}
\end{equation}

The boundedness of the operator $\mathcal{T}_2$ follows from the following lemma:

\begin{lemma}\label{prop1}
If $\omega(x)=|x|^2$ and $s>1/2$ then
\begin{equation}\label{bil3}
\|Q_2(F,G)\|_{L^\infty}\lesssim_s \|F\|_{L^2_s}\|G\|_{L^2_s}.
\end{equation}
\end{lemma}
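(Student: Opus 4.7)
My plan is to reduce the bound to two $L^2(S^2)$ estimates via Cauchy--Schwarz in spherical coordinates centered at $p$: the first is a direct weighted estimate, the second requires a $TT^*$/Funk--Hecke identity for the $2$-plane Radon transform. First, for $\omega(q)=|q|^2$ the squared-distance identity $|p|^2+|x+y-p|^2-|x|^2-|y|^2 = 2(p-x)\cdot(p-y)$ gives
\[
Q_2(F,G)(p) = \tfrac12\iint_{\mathbb{R}^6}\delta((p-x)\cdot(p-y))\,F(x)G(y)\,dx\,dy.
\]
Substituting $u = x-p = r\sigma$ with $r>0,\sigma\in S^2$ and using $\delta(u\cdot v) = r^{-1}\delta(\sigma\cdot v)$ yields
\[
Q_2(F,G)(p) = \tfrac12\int_{S^2} A_F(p,\sigma)\,RG(p,\sigma)\,d\sigma,
\]
where $A_F(p,\sigma) := \int_0^\infty r F(p+r\sigma)\,dr$ and $RG(p,\sigma) := \int_{\mathbb{R}^3}G(p+v)\delta(v\cdot\sigma)\,dv$ is the integral of $G$ over the hyperplane through $p$ perpendicular to $\sigma$. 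By Cauchy--Schwarz in $\sigma$ it suffices to prove
\[
\|A_F(p,\cdot)\|_{L^2(S^2)}\lesssim_s \|F\|_{L^2_s}, \qquad \|RG(p,\cdot)\|_{L^2(S^2)}\lesssim_s \|G\|_{L^2_s},
\]
uniformly in $p$.

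The $A$-factor is handled by another Cauchy--Schwarz in $r$ against weights $\langle p+r\sigma\rangle^{\pm s}$: completing the square in $r$ shows $\sup_{p,\sigma}\int_0^\infty \langle p+r\sigma\rangle^{-2s}\,dr \lesssim_s 1$ for any $s>1/2$, and the remaining integral $\int_{S^2}\int_0^\infty r^2 F(p+r\sigma)^2\langle p+r\sigma\rangle^{2s}\,dr\,d\sigma$ equals $\|F\|_{L^2_s}^2$ after reverting to Cartesian coordinates $q=p+r\sigma$.

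The $RG$-factor is the genuine $TT^*$ step. Expanding the square produces an inner integral $I(x,y;p) := \int_{S^2}\delta((x-p)\cdot\sigma)\delta((y-p)\cdot\sigma)\,d\sigma$, which a Funk--Hecke computation (choose an orthonormal frame whose third axis is along $(x-p)\times(y-p)$) evaluates to $2/|(x-p)\times(y-p)|$. Reparametrizing in spherical coordinates around $p$ via $x=p+r\sigma$, $y=p+\rho\tau$ and using $|(x-p)\times(y-p)|=r\rho\sin\angle(\sigma,\tau)$ transforms this into
\[
\|RG(p,\cdot)\|_{L^2(S^2)}^2 = 2\iint_{S^2\times S^2}\frac{A_G(p,\sigma)\,A_G(p,\tau)}{\sin\angle(\sigma,\tau)}\,d\sigma\,d\tau.
\]
The kernel $1/\sin\angle(\sigma,\tau)$ has uniformly bounded $L^1(S^2)$-mass, since $d\tau/\sin\angle(\sigma,\tau) = d\theta\,d\phi$ in polar coordinates around $\sigma$, so by Schur's test convolution against it is bounded on $L^2(S^2)$. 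This yields $\|RG(p,\cdot)\|_{L^2(S^2)}^2 \lesssim \|A_G(p,\cdot)\|_{L^2(S^2)}^2 \lesssim_s \|G\|_{L^2_s}^2$, closing the estimate.

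The main obstacle is the Funk--Hecke identity $\int_{S^2}\delta(u_1\cdot\sigma)\delta(u_2\cdot\sigma)\,d\sigma = 2/|u_1\times u_2|$: this is the $TT^*$ ingredient that converts a borderline weighted estimate on $\mathbb{R}^3$ into an $L^2(S^2)$ convolution against the barely integrable kernel $1/\sin\angle$, matching the critical threshold $s>1/2$ of the theorem.
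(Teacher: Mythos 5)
Your proof is correct, and it takes a genuinely different (though philosophically related) route from the paper's. The paper mollifies the delta, absorbs the weights into the kernel, and runs a $TT^\ast$ argument on the full bilinear form: squaring in the $x$ variable produces the kernel $K_s(y,y')=\int \psi_\varepsilon(x\cdot y)\psi_\varepsilon(x\cdot y')\langle x+p\rangle^{-2s}dx$, which is bounded by $\tfrac{1}{|y||y'|}\big(|\widehat{y}-\widehat{y'}|^{-1}+|\widehat{y}+\widehat{y'}|^{-1}\big)$ via an explicit slab-intersection estimate, followed by polar coordinates and Schur's lemma on $\mathbb{S}^2$. You instead split $Q_2$ \emph{before} squaring, writing it as an angular pairing $\int_{\mathbb{S}^2}A_F(p,\sigma)\,RG(p,\sigma)\,d\sigma$ of a line transform and a plane (Radon) transform, apply Cauchy--Schwarz on $\mathbb{S}^2$, and then run the $TT^\ast$ step only on the Radon factor, where the kernel $\int_{\mathbb{S}^2}\delta(u_1\cdot\sigma)\delta(u_2\cdot\sigma)\,d\sigma=2/|u_1\times u_2|$ is computed exactly. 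The two arguments share the same two analytic ingredients --- the one-dimensional bound $\sup_{p,\sigma}\int\langle p+r\sigma\rangle^{-2s}dr\lesssim_s1$ for $s>1/2$ (the paper's \eqref{bil19}) and a Schur test on $\mathbb{S}^2$ against a kernel with integrable angular singularity (your $1/\sin\angle(\sigma,\tau)$, which is comparable to the paper's $|\widehat{y}-\widehat{y'}|^{-1}+|\widehat{y}+\widehat{y'}|^{-1}$) --- so neither loses anything at the critical threshold. Your version is arguably more geometric and the kernel computation is exact rather than an upper bound; the paper's version, by working with $\psi_\varepsilon$ and proving bounds uniform in $\varepsilon$, is more careful about the distributional manipulations. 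To be fully rigorous you should either insert absolute values on $F,G$ throughout (to justify Fubini and the polar-coordinate rearrangements) or regularize the delta as the paper does; this is routine and does not affect the structure of your argument.
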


\begin{proof} We replace the $\delta_0$ function with a smooth version. More precisely, we fix a smooth even function $\psi:\mathbb{R}\to[0,\infty)$ supported in the interval $[-1,1]$ with $\int_{\mathbb{R}}\psi(t)\,dt=1$. For any $\varep\in(0,1]$ let $\psi_\varep(t):=(1/\varep)\psi(t/\varep)$. Since
\begin{equation*}
\omega(p)+\omega(x+y-p)-\omega(x)-\omega(y)=2(x-p)\cdot(y-p),
\end{equation*}
for \eqref{bil3} it suffices to prove that
\begin{equation}\label{bil4}
\Big|\int_{\mathbb{R}^3\times\mathbb{R}^3}\psi_\varep((x-p)\cdot(y-p))F(x)G(y)\,dxdy\Big|\lesssim_s \|F\|_{L^2_s}\|G\|_{L^2_s}
\end{equation}
for any $p\in\mathbb{R}^3$ and $\varep\in(0,1]$. We let
\begin{equation*}
f(x):=\langle x+p\rangle^sF(x+p),\qquad g(y):=\langle y+p\rangle^sG(y+p).
\end{equation*} 
After changes of variables, for \eqref{bil4} it suffices to prove that
\begin{equation}\label{bil5}
\Big|\int_{\mathbb{R}^3\times\mathbb{R}^3}\psi_\varep(x\cdot y)\frac{f(x)}{\langle x+p\rangle^s}\frac{g(y)}{\langle y+p\rangle^s}\,dxdy\Big|\lesssim_s \|f\|_{L^2}\|g\|_{L^2}.
\end{equation}
This is equivalent to proving $L^2$ boundedness of a linear operator, i.e.
\begin{equation}\label{bil6}
\|L_2g\|_{L^2}\lesssim_s\|g\|_{L^2}\qquad\text{ where }\qquad L_2g(x):=\int_{\mathbb{R}^3}\psi_\varep(x\cdot y)\frac{1}{\langle x+p\rangle^s}\frac{g(y)}{\langle y+p\rangle^s}\,dy,
\end{equation}
uniformly for any $p\in\mathbb{R}^3$ and $\varep\in(0,1]$. 

To prove \eqref{bil6} we use a $TT^\ast$-type argument. We may assume $g\geq 0$ and write
\begin{equation*}
\begin{split}
\|L_2g\|_{L^2}^2&=\int_{\mathbb{R}^3\times\mathbb{R}^3\times\mathbb{R}^3}\psi_\varep(x\cdot y)\psi_\varep(x\cdot y')\frac{1}{\langle x+p\rangle^{2s}}\frac{g(y)}{\langle y+p\rangle^s}\frac{g(y')}{\langle y'+p\rangle^s}\,dydy'dx\\
&=\int_{\mathbb{R}^3\times\mathbb{R}^3}K_s(y,y')\frac{g(y)}{\langle y+p\rangle^s}\frac{g(y')}{\langle y'+p\rangle^s}\,dydy',
\end{split}
\end{equation*}
where
\begin{equation*}
K_s(y,y')=K_{s,\varep,p}(y,y'):=\int_{\mathbb{R}^3}\psi_\varep(x\cdot y)\psi_\varep(x\cdot y')\frac{1}{\langle x+p\rangle^{2s}}\,dx.
\end{equation*}
Using Lemma \ref{lem4} (ii) below, we have
\begin{equation*}
|K_s(y,y')|\lesssim_s\frac{1}{|y||y'|}\Big(\frac{1}{|\widehat{y}-\widehat{y'}|}+\frac{1}{|\widehat{y}+\widehat{y'}|}\Big),
\end{equation*}
where $\widehat{x}:=x/|x|$ for any $x\in\mathbb{R}^3$. For \eqref{bil6} it suffices to prove that
\begin{equation}\label{bil7}
\Big|\int_{\mathbb{R}^3\times\mathbb{R}^3}\frac{1}{|y||y'|}\frac{1}{|\widehat{y}-\widehat{y'}|}\cdot\frac{g(y)}{\langle y+p\rangle^s}\frac{h(y')}{\langle y'+p'\rangle^s}\,dydy'\Big|\lesssim_s\|g\|_{L^2}\|h\|_{L^2},
\end{equation}
for any $g,h\in L^2(\mathbb{R}^3)$ and any $p,p'\in\mathbb{R}^3$.

For $\theta,\theta'\in\mathbb{S}^2$ let
\begin{equation*}
\widetilde{g}(\theta):=\Big[\int_0^\infty|g(r\theta)|^2r^2\,dr\Big]^{1/2},\qquad \widetilde{h}(\theta'):=\Big[\int_0^\infty|h(r\theta')|^2r^2\,dr\Big]^{1/2}.
\end{equation*}
We make the changes of variables $y=r\theta$ and $y'=r'\theta'$ in the integral in the left-hand side of \eqref{bil7}. Notice that
\begin{equation*}
\int_0^\infty\frac{g(r\theta)}{\langle r\theta+p\rangle^s}r\,dr\lesssim_s\widetilde{g}(\theta),\qquad\int_0^\infty\frac{h(r'\theta')}{\langle r'\theta'+p'\rangle^s}r'\,dr'\lesssim_s\widetilde{h}(\theta'),
\end{equation*}
using the Cauchy-Schwarz inequality and \eqref{bil19}. Thus the integral in the left-hand side of \eqref{bil7} is bounded by
\begin{equation*}
C_s\Big|\int_{\mathbb{S}^2\times\mathbb{S}^2}\frac{1}{|\theta-\theta'|}\cdot\widetilde{g}(\theta)\widetilde{h}(\theta')\,d\theta d\theta'\Big|.
\end{equation*}
Using Schur's lemma this is bounded by $\|\widetilde{g}\|_{L^2(\mathbb{S}^2)}\|\widetilde{h}\|_{L^2(\mathbb{S}^2)}$, and the desired estimates \eqref{bil7} follow. This completes the proof.
\end{proof}

We summarize below two technical estimates we used in the proof of Lemma \ref{prop1}.

\begin{lemma}\label{lem4}
(i) If $\theta\in\mathbb{S}^2$ and $p\in\mathbb{R}$ then
\begin{equation}\label{bil19}
\int_{\mathbb{R}}\frac{1}{\langle r\theta+p\rangle^{2s}}\,dr\lesssim_s 1.
\end{equation}

(ii) Assume that $\varep_1,\varep_2\in[0,1)$, $a,b\in\mathbb{R}$, $p\in\mathbb{R}^3$, $u,v\in\mathbb{S}^2$, and $s>1/2$. Then
\begin{equation}\label{bil20}
\int_{\mathbb{R}^3}\mathbf{1}_{[0,\varep_1]}(x\cdot v-a)\mathbf{1}_{[0,\varep_2]}(x\cdot w-b)\frac{1}{\langle x+p\rangle^{2s}}\,dx\lesssim_s \frac{\varep_1\varep_2}{|v-w|}+\frac{\varep_1\varep_2}{|v+w|}.
\end{equation}
\end{lemma}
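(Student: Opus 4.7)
\textbf{Proof plan for Lemma \ref{lem4}.} For part (i), the idea is to diagonalize the quadratic form in $r$. Decompose $p = p_{\parallel}\theta + p_{\perp}$ where $p_{\perp}\perp\theta$, so that $|r\theta+p|^2 = (r+p_{\parallel})^2 + |p_{\perp}|^2$. After the change of variable $r \mapsto r - p_{\parallel}$, the integral becomes
\begin{equation*}
\int_{\mathbb{R}} \frac{dr}{(1+r^2+|p_{\perp}|^2)^{s}} \,\leq\, \int_{\mathbb{R}} \frac{dr}{(1+r^2)^{s}} \,\lesssim_s\, 1,
\end{equation*}
which converges since $2s > 1$. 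This is the entire content of (i).

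For part (ii), first note that if $v = \pm w$ the right-hand side is $+\infty$, so the bound is trivial; assume therefore that $v$ and $w$ are linearly independent. The plan is to choose an orthonormal basis $(u_1,u_2,u_3)$ of $\mathbb{R}^3$ adapted to the pair $(v,w)$: take $u_1 := v$, $u_2$ the unit vector in $\mathrm{span}(v,w)$ orthogonal to $v$ and making an acute angle with $w$, and $u_3 := u_1 \times u_2$. Writing $\cos\alpha := v\cdot w$ and $\sin\alpha := \sqrt{1-(v\cdot w)^2} > 0$, we get $w = \cos\alpha\, u_1 + \sin\alpha\, u_2$. In the coordinates $x = y_1 u_1 + y_2 u_2 + y_3 u_3$, the two slab conditions become
\begin{equation*}
y_1 \in [a, a+\varep_1], \qquad y_1\cos\alpha + y_2\sin\alpha \in [b, b+\varep_2],
\end{equation*}
so for fixed $y_1$ the variable $y_2$ is constrained to an interval of length $\varep_2/\sin\alpha$.

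The key step is now to use part (i) to integrate out $y_3$. For each fixed $(y_1, y_2)$, part (i) (applied with the direction $u_3$ and the shift $p + y_1 u_1 + y_2 u_2$) yields $\int_{\mathbb{R}} \langle x + p \rangle^{-2s}\, dy_3 \lesssim_s 1$. The remaining $(y_1, y_2)$ integral is over a rectangle of area $\varep_1 \cdot (\varep_2/\sin\alpha)$, giving
\begin{equation*}
\int_{\mathbb{R}^3}\mathbf{1}_{[0,\varep_1]}(x\cdot v-a)\mathbf{1}_{[0,\varep_2]}(x\cdot w-b)\frac{dx}{\langle x+p\rangle^{2s}} \,\lesssim_s\, \frac{\varep_1\varep_2}{\sin\alpha}.
\end{equation*}

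To finish, rewrite $\sin\alpha$ via $|v-w|^2 = 2(1-\cos\alpha)$ and $|v+w|^2 = 2(1+\cos\alpha)$, so that $\sin\alpha = \tfrac{1}{2}|v-w||v+w|$. Since $|v-w|^2 + |v+w|^2 = 4$, at least one of $|v\pm w|$ exceeds $\sqrt{2}$, hence
\begin{equation*}
\frac{1}{\sin\alpha} \,=\, \frac{2}{|v-w|\,|v+w|} \,\lesssim\, \frac{1}{|v-w|} + \frac{1}{|v+w|},
\end{equation*}
which yields the desired estimate \eqref{bil20}. I do not foresee any serious obstacle here: the only mild subtlety is keeping track of the adapted frame so that part (i) can be reused cleanly to handle the unconstrained direction $u_3$.
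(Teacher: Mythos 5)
Your proposal is correct and follows essentially the same route as the paper: pass to an orthonormal frame adapted to $v,w$, observe that the two slab conditions confine two coordinates to a region of area $\lesssim \varepsilon_1\varepsilon_2/\sin\alpha$, integrate the free third coordinate using the one-dimensional bound of part (i), and convert $\sin\alpha$ into $|v-w|,|v+w|$ (the paper does the same bookkeeping with $v=e_1$, $w=(w_1,w_2,0)$ and $|w_2|\approx\min(|v-w|,|v+w|)$, assuming WLOG $\varepsilon_1\le\varepsilon_2$). Your slicing in $y_1$ and the identity $\sin\alpha=\tfrac12|v-w|\,|v+w|$ are only cosmetic variations, and the argument is complete.
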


\begin{proof} (i) By rotation invariance, we may assume $\omega=(1,0,0)$. The bound \eqref{bil19} is then implied by the easy estimate
\begin{equation}\label{bil21}
\sup_{q_1,q_2,q_3\in\mathbb{R}}\int_{\mathbb{R}}\frac{1}{\big[(r+q_1)^2+q_2^2+q_3^2+1\big]^s}\,dr\lesssim_s 1.
\end{equation}

(ii) We may assume $\varep_1\leq\varep_2$. By rotation invariance, we may assume $v=(1,0,0)$ and $w=(w_1,w_2,0)$. Clearly, $|w_2|\approx \min(|v-w|,|v+w|)$. Notice also that
\begin{equation*}
x\cdot w-b=x_1w_1+x_2w_2-b=x_2w_2-(b-aw_1)+(x_1-a)w_1.
\end{equation*}
Since $|w_1|\leq 1$, the integral in the left-hand side of \eqref{bil20} is bounded by
\begin{equation*}
\int_{\mathbb{R}^3}\mathbf{1}_{[0,\varep_1]}(x_1-a)\mathbf{1}_{[-4\varep_2,4\varep_2]}(x_2w_2-b')\frac{1}{\langle x+p\rangle^{2s}}\,dx.
\end{equation*}
The desired conclusion follows using \eqref{bil21} and integrating first the variable $x_3$.
\end{proof}

\subsection{The operator $\mathcal{T}_3$}

As in the previous subsection we notice that $\mathcal{T}_3(f,g,h)=f\cdot Q_3(g,h)$ where
\begin{equation}\label{bil30}
\begin{split}
Q_3(F,G)(p)&:=\int_{\mathbb{R}^3\times\mathbb{R}^3\times\mathbb{R}^3}\delta(p-p_3+x-y)\delta(\omega(p)-\omega(p_3)+\omega(x)-\omega(y))F(x)G(y)\,dxdy dp_3\\
&=\int_{\mathbb{R}^3\times\mathbb{R}^3}\delta(\omega(p)-\omega(x-y+p)+\omega(x)-\omega(y))F(x)G(y)\,dxdy.
\end{split}
\end{equation}

In view of the definitions, boundedness of $\mathcal{T}_3$ follows from the following lemma:

\begin{lemma}\label{prop2}
If $\omega(x)=|x|^2$ as before and $s>1/2$ then
\begin{equation}\label{bil31}
\|Q_3(F,G)\|_{L^\infty}\lesssim_s \|F\|_{L^2_s}\|G\|_{L^2_s}.
\end{equation}
\end{lemma}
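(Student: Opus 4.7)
The plan is to mimic the proof of Lemma \ref{prop1}. The crucial observation is that, after a suitable change of variables, the phase appearing in the delta function of $Q_3$ differs from the one in $Q_2$ only by an additive shift depending on one of the integration variables, and that Lemma \ref{lem4}(ii) is already formulated with arbitrary shifts $a, b \in \mathbb{R}$, so its right-hand side does not depend on these shifts.

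First I would compute the phase explicitly. With $\omega(q) = |q|^2$, expanding $|x-y+p|^2$ and simplifying yields
$$\omega(p) - \omega(x-y+p) + \omega(x) - \omega(y) = 2(x-y) \cdot (y-p).$$
After the substitutions $x \mapsto x+p$ and $y \mapsto y+p$ in \eqref{bil30}, this brings $Q_3$ into the form
$$Q_3(F,G)(p) = \frac{1}{2} \int_{\mathbb{R}^3 \times \mathbb{R}^3} \delta(x \cdot y - |y|^2) F(x+p) G(y+p) \, dx \, dy,$$
which has exactly the same structure as the integral representation of $Q_2$ from the proof of Lemma \ref{prop1}, except that $\delta(x \cdot y)$ is replaced by $\delta(x \cdot y - |y|^2)$.

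Next I would regularize $\delta$ by $\psi_\varepsilon$ as in Lemma \ref{prop1}, set $f(x) := \langle x+p\rangle^s F(x+p)$ and $g(y) := \langle y+p\rangle^s G(y+p)$, and reduce the problem, by $TT^\ast$, to the uniform (in $p$ and $\varepsilon$) $L^2 \to L^2$ boundedness of the linear operator
$$\widetilde{L}_3 g(x) := \int_{\mathbb{R}^3} \psi_\varepsilon(x \cdot y - |y|^2)\, \frac{1}{\langle x+p\rangle^s}\, \frac{g(y)}{\langle y+p\rangle^s} \, dy.$$
Squaring and integrating in $x$ produces the kernel
$$K_s(y,y') = \int_{\mathbb{R}^3} \psi_\varepsilon(x \cdot y - |y|^2)\, \psi_\varepsilon(x \cdot y' - |y'|^2)\, \frac{1}{\langle x+p\rangle^{2s}} \, dx.$$

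The main point to verify is that $K_s$ obeys the very same pointwise bound as in the proof of Lemma \ref{prop1}. Writing $x \cdot y - |y|^2 = |y|(x \cdot \widehat{y} - |y|)$, I would majorize $\psi_\varepsilon(x \cdot y - |y|^2)$ by a multiple of $\varepsilon^{-1}\mathbf{1}_{[0, 2\varepsilon/|y|]}(x \cdot \widehat{y} - (|y| - \varepsilon/|y|))$, and analogously for $y'$. Invoking Lemma \ref{lem4}(ii) with $v = \widehat{y}$, $w = \widehat{y'}$, $\varepsilon_1 = 2\varepsilon/|y|$, $\varepsilon_2 = 2\varepsilon/|y'|$ and the corresponding shifts $a, b$, the $\varepsilon$'s cancel and one obtains
$$|K_s(y,y')| \lesssim_s \frac{1}{|y||y'|} \Big( \frac{1}{|\widehat{y} - \widehat{y'}|} + \frac{1}{|\widehat{y} + \widehat{y'}|} \Big),$$
precisely because the right-hand side of Lemma \ref{lem4}(ii) is independent of $a, b$. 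This is the same kernel bound that was used in the proof of Lemma \ref{prop1}, so the argument concludes by invoking \eqref{bil7}. I do not anticipate any genuine obstacle: the only subtlety is the observation that the $|y|^2$ shift is rendered harmless by the shift-invariance of the bound in Lemma \ref{lem4}(ii).
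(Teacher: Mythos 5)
Your proposal is correct and follows essentially the same route as the paper: identify the phase $2(x-y)\cdot(y-p)$, shift variables by $p$, regularize the delta, reduce by a $TT^\ast$ argument to a uniform $L^2$ bound for the linear operator with kernel $\psi_\varepsilon((x-y)\cdot y)$, and control the resulting $TT^\ast$ kernel via Lemma \ref{lem4}(ii) and \eqref{bil7}. The paper leaves the final step as ``as in Lemma \ref{prop1}''; your explicit remark that the shift $|y|^2$ is harmless precisely because Lemma \ref{lem4}(ii) allows arbitrary shifts $a,b$ is exactly the point being implicitly used there.
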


\begin{proof} As before we replace $\delta$ with $\psi_\varep$ and notice that
\begin{equation*}
\omega(p)-\omega(x-y+p)+\omega(x)-\omega(y)=2(x-y)\cdot(y-p).
\end{equation*}
We let $f(x)=\langle x+p\rangle^sF(x+p)$ and $g(y)=\langle y+p\rangle^sG(y+p)$ as in the proof of Lemma \ref{prop1}. After changes of variables, for \eqref{bil31} it suffices to prove that
\begin{equation}\label{bil33}
\|L_3g\|_{L^2}\lesssim_s\|g\|_{L^2}\qquad\text{ where }\qquad L_3g(x):=\int_{\mathbb{R}^3}\psi_\varep((x-y)\cdot y)\frac{1}{\langle x+p\rangle^s}\frac{g(y)}{\langle y+p\rangle^s}\,dy,
\end{equation}
uniformly for $p\in\mathbb{R}^3$ and $\varep\in(0,1]$. This follows using the $TT^\ast$ argument as in Proposition \ref{prop1}, the uniform bounds in Lemma \ref{lem4} (ii), and \eqref{bil7}.
\end{proof}

\section{Proof of Theorems \ref{theorem:main} and \ref{theorem:main2}}\label{Sec:MainProof}

The two theorems follow by similar arguments from Propositions \ref{PropLinfty} and \ref{PropL2}. For concreteness, we provide all the details only for the proof of Theorem \ref{theorem:main2}. 

\begin{proof}[Proof of Theorem \ref{theorem:main2}] (i) Let $T:=A_s^{-1}R^{-2}$ for a sufficiently large constant $A_s$. We define the approximating sequence
\begin{equation}\label{cran1}
f^0(t):=f_0,\qquad f^{n+1}(t):=f_0+\int_0^t \mathcal{Q}(f^n(\tau))\,d\tau,
\end{equation}
on the interval $[0,T]$. Using Proposition \ref{PropL2} it follows easily, by induction that $f^n\in C^1([0,T]:L^2_s)$ and $\sup_{t\in[0,T]}\|f_n(t)\|_{L^2_s}\leq 2R$. Using again Proposition \ref{PropL2} it follows that the sequence $f^n$ is Cauchy in $C([0,T]:L^2_s)$, thus convergent to a function $f\in C([0,T]:L^2_s)$ that has the properties
 \begin{equation}\label{cran2}
f(0)=f_0,\qquad f(t)=f_0+\int_0^t \mathcal{Q}[f(\tau)]\,d\tau,\qquad \sup_{t\in[0,T]}\|f(t)\|_{L^2_s}\leq 2R.
\end{equation}
In particular $\partial_tf=\mathcal{Q}[f]$, thus $f\in C^1([0,T]:L^2_s)$. Uniqueness and continuity of the flow map $f_0\to f$ follow again from the contraction principle.

(ii) Clearly,  $f$ is real-valued if $f_0$ is real-valued. To prove non-negativity, we need to be slightly more careful because the simple recursive scheme \eqref{cran1} does not preserve non-negativity. 

\noindent \underline{Step 1:} We construct a different approximating sequence, based on the temporal forward Euler scheme: for any $n\in\mathbb{N}$ we set $\Delta_n=T/n$ and define the sequence $\{g^{n,m}\}_{i=0}^{n-1}$ by
\begin{equation}\label{Sec:MainProof:E4}\begin{aligned}
g^{n,0}\ & := \ f_0,\qquad g^{n,m+1}:= \ g^{n,m} + \Delta_n \mathcal{Q}[g^{n,m}].
\end{aligned}
\end{equation}
Then we define $g^n$ for $t\in [m\Delta_n, (m+1)\Delta_n]$ by the formula 
\begin{equation}\label{carc3}
\begin{split}
g^n(t)&:=g^{n,m}+(t-m\Delta_n) \mathcal{Q}[g^{n,m}]\\
&= \frac{1}{\Delta_n}\left((t-m\Delta_n) g^{n,m+1} + ((m+1)\Delta_n-t) g^{n,m}\right).
\end{split}
\end{equation}
Using Proposition \ref{PropL2} inductively and the assumption $T=A_s^{-1}R^{-2}$, it is easy to verify that
\begin{equation}\label{carc3.5}
\|g^{n,m}\|_{L^2_s}\leq 2R\qquad \text{ for any }n\geq 1\text{ and }m\in\{0,\ldots,n-1\}.
\end{equation}
In particular, using the definition \eqref{carc3},
\begin{equation}\label{carc4}
g^n\in C([0,T]:L^2_s)\text{ for any }n\geq 1\qquad\text{ and }\qquad\sup_{t\in[0,T]}\|g^n(t)\|_{L^2_s}\leq 2R.
\end{equation}

\noindent \underline{Step 2:} We show now that
\begin{equation} \label{carc8}
\lim_{n\to\infty}g^n=f\qquad\text{ in }\qquad C([0,T]:L^2_s).
\end{equation}
Let $\delta_n:=\sup_{t\in[0,T]}\|g^n(t)-f(t)\|_{L^2_s}$. Given $t\in[0,T]$ we fix $m\in\{0,1,\ldots,n-1\}$ such that $mT/n\leq t\leq (m+1)T/n$. Then we write, using \eqref{cran2}--\eqref{carc3}, 
\begin{equation}\label{cran7.5}
\begin{split}
&g^n(t)-f(t)\\
&=\{g^n(t)-g^{n,m}\}+\Big\{g^{n,m}-f_0-\int_0^{mT/n}\mathcal{Q}[f(\tau)]\,d\tau\Big\}-\int_{mT/n}^t\mathcal{Q}[f(\tau)]\,d\tau\\
&=I(t)+II(t)+III(t),
\end{split}
\end{equation}
where
\begin{equation*}
\begin{split}
I(t)&:=(t-mT/n) \mathcal{Q}[g^{n,m}],\\
II(t)&:=\sum_{j=0}^{m-1}\int_{jT/n}^{(j+1)T/n}\big\{\mathcal{Q}[g^{n,j}]-\mathcal{Q}[f(\tau)]\big\}\,d\tau,\\
III(t)&:=-\int_{mT/n}^t\mathcal{Q}[f(\tau)]\,d\tau.
\end{split}
\end{equation*}
Using Proposition \ref{PropL2} and the bounds \eqref{cran2} and \eqref{carc3.5} we estimate
\begin{equation}\label{cran7}
\|I(t)\|_{L^2_s}+\|III(t)\|_{L^2_s}\lesssim (T/n)R^3\lesssim R/n.
\end{equation}
We estimate also, for any $\tau\in[jT/n,(j+1)T/n]$,
\begin{equation*}
\begin{split}
\big\|\mathcal{Q}[g^{n,j}]-\mathcal{Q}[f(\tau)]\big\|_{L^2_s}&\lesssim \big\|\mathcal{Q}[g^{n,j}]-\mathcal{Q}[g^{n}(\tau)]\big\|_{L^2_s}\\
&+\big\|\mathcal{Q}[g^{n}(\tau)]-\mathcal{Q}[f(\tau)]\big\|_{L^2_s}\\
&\lesssim (T/n)R^5+\delta_nR^2,
\end{split}
\end{equation*}
using Proposition \ref{PropL2} and recalling the definition $\delta_n:=\sup_{t\in[0,T]}\|g^n(t)-f(t)\|_{L^2_s}$. Thus
\begin{equation}\label{cran8}
\|II(t)\|_{L^2_s}\lesssim R/n+\delta_n (TR^2).
\end{equation}
Since $TR^2\leq A_s^{-1}\ll 1$, it follows from \eqref{cran7.5}--\eqref{cran8} that $\delta_n\lesssim R/n$. The desired conclusion \eqref{carc8} follows.

\noindent \underline{Step 3:} Finally, we show that all the functions $g^n$ are non-negative. In view of the defintion \eqref{carc3}, it suffices to prove that the functions $g^{n,m}$ are non-negative for any $n\geq 1$ and $m\in\{0,\ldots,n-1\}$. We prove this by induction over $m$. The case $m=0$ follows from the hypothesis $f_0\geq 0$. Moreover, recalling the definition \eqref{4waveEquivalence},
\begin{equation*}
g^{n,m+1}\geq g^{n,m}+\Delta_n\big[\mathcal{T}_2(g^{n,m},g^{n,m},g^{n,m})+\mathcal{T}_3(g^{n,m},g^{n,m},g^{n,m})\big].
\end{equation*}
Recall that $\mathcal{T}_k(g^{n,m},g^{n,m},g^{n,m})=g^{n,m}\cdot Q_k(g^{n,m},g^{n,m})$, $k\in\{1,2\}$, see definitions \eqref{bil1} and \eqref{bil30}. Using Lemmas \ref{prop1} and \ref{prop2}, it follows that
\begin{equation*}
g^{n,m+1}\geq (1-C_sR^2T/n)g^{n,m}\geq (1-1/(2n))g^{n,m}.
\end{equation*} 
The non-negativity of the functions $g^{n,m}$ follows. This implies the non-negativity of the solution $f$, as a consequence of \eqref{carc8}. 
\end{proof}

\section{Further results}\label{Further}
Define the function space  $\mathbb{L}^r_s$ by the norm
$$\|f\|_{\mathbb{L}^r_s} \ = \ \| (1+\omega_p)^s f\|_{L^r}.$$

Notice that our theorems \ref{theorem:main} and \ref{theorem:main2} are valid for the case where the initial condition does not belong to $\mathbb{L}_1^1$. In this case, moment estimate techniques, such as those used in \cite{BriantEinav:2016:OTC,AlonsoGambaBinh} are not applicable. 

Now, if we consider the 4-wave turbulence kinetic equation \eqref{4wave} (or \eqref{4waveEquivalence}), and suppose in addition that $f_0\in \mathbb{L}_1^1$; similar to the case of the classical Boltzmann equation \cite{MR1942465}, we also have the conservation of mass, momentum and energy of  solutions to \eqref{4wave}.

Taking any \ $\varphi\in C_c(\mathbb{R}^3)$ as a test function in \eqref{4wave}, the following weak formulation holds true
\begin{equation}
\begin{aligned}\label{WeakForm}
    \int_{\mathbb{R}^3}\mathcal{Q}[f]\varphi dp   \ =   & \  \int_{\mathbb{R}^9}\delta(p+p_1-p_2-p_3)\delta(\omega+\omega_1-\omega_2-\omega_3)\times\\
& \ \times ff_1(f_2+f_3)[\varphi_2+\varphi_3-\varphi-\varphi_1]dp_1dp_2dp_3dp,
\end{aligned}
\end{equation}
in which, again, we have used the abbreviation $\varphi=\varphi(t,p)$, $\varphi_1=\varphi(t,p_1)$, $\varphi_2=\varphi(t,p_2)$, $\varphi_3=\varphi(t,p_3)$. By choosing $\varphi$ to be  $1$, $p$ or $\omega$, the right hand side of \eqref{WeakForm} vanishes. 

Since

\begin{equation*}
\label{WeakFormulation}
\begin{aligned}
\partial_t \int_{\mathbb{R}^3}f\varphi dp\ =  \ \int_{\mathbb{R}^3}\mathcal{Q}[f]\varphi dp   ,
\end{aligned}
\end{equation*}
the following conservation laws are then satisfied
\begin{equation}\label{ConservationLaws}
\partial_t \int_{\mathbb{R}^3}f dp   \ =    \ \partial_t \int_{\mathbb{R}^3}fp^idp   \ =    \ \partial_t \int_{\mathbb{R}^3}f \omega dp  \ = \ 0,
\end{equation}
with $p=(p^1,p^2,p^3)$, $i\in\{1,2,3\}$, or equivalently
\begin{equation}\begin{aligned}\label{ConservationLaws2}
\int_{\mathbb{R}^3}f(t,p)dp \ & = \ \int_{\mathbb{R}^3}f_0(p)dp,\\
\int_{\mathbb{R}^3}f(t,p)p^i dp \ & = \ \int_{\mathbb{R}^3}f_0(p)p^i dp,
\\
\int_{\mathbb{R}^3}f(t,p)\omega_{p} dp \ & = \ \int_{\mathbb{R}^3}f_0(p)\omega_{p} dp.
\end{aligned}\end{equation}

By the same argument used in (ii) of the proofs of Theorem \ref{theorem:main} and Theorem \ref{theorem:main2}, we obtain the following theorem.

\begin{theorem}\label{theorem:further} Assume that $\omega$ and the positive initial condition $f_0$ satisfy the assumptions of Theorem \ref{theorem:main} and Theorem \ref{theorem:main2}. In addition, suppose $f_0\in \mathbb{L}_1^1$. Then the same conclusion of Theorem \ref{theorem:main} and Theorem \ref{theorem:main2} holds true. Furthermore, $f\in C([0,T]: \mathbb{L}_1^1)$ and $f$ also satisfies the conservation laws \eqref{ConservationLaws2}.

\end{theorem}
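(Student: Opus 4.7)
The plan is to combine the Euler approximation scheme already used in the non-negativity proof (cf.\ Section \ref{Sec:MainProof}, equation \eqref{Sec:MainProof:E4}) with a moment estimate on the collision operator. The key preliminary lemma is a bound of the form
\begin{equation*}
\|\mathcal{Q}[f]\|_{\mathbb{L}_1^1} \ \lesssim_s \ \|f\|_X^2 \, \|f\|_{\mathbb{L}_1^1},
\end{equation*}
where $X$ denotes $L^\infty_s$ (for Theorem \ref{theorem:main}) or $L^2_s$ (for Theorem \ref{theorem:main2}). For the loss terms $\mathcal{T}_2, \mathcal{T}_3$, where one of the four factors is explicitly $f(p)$, two of the remaining $f$ factors are placed in $X$ and the third in $\mathbb{L}_1^1$, and the resulting weightless trilinear integral is controlled by the Radon-transform bounds from Propositions \ref{proposition:T2}, \ref{proposition:T3} and Lemmas \ref{prop1}, \ref{prop2}. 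For the gain term $\mathcal{T}_1$ the resonance relation $\omega_2 + \omega_3 = \omega + \omega_1$ yields $1+\omega(p) \le 1 + \omega_2 + \omega_3$, which transfers the $\mathbb{L}_1^1$ weight from $p$ onto either $p_2$ or $p_3$; the same trilinear estimates then apply.

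Once this estimate is in hand, the Euler iterates satisfy, thanks to the uniform $X$-bound $\|g^{n,m}\|_X \le 2R$,
\begin{equation*}
\|g^{n,m+1}\|_{\mathbb{L}_1^1} \ \le \ \bigl(1 + C_s \Delta_n R^2\bigr) \|g^{n,m}\|_{\mathbb{L}_1^1},
\end{equation*}
so a discrete Gronwall inequality gives $\sup_{n,m} \|g^{n,m}\|_{\mathbb{L}_1^1} \le e^{C_s R^2 T} \|f_0\|_{\mathbb{L}_1^1}$. Combined with the strong convergence $g^n \to f$ in $C([0,T]:X)$ established in Step 2 of the proof of Theorem \ref{theorem:main2} and lower semi-continuity of the $\mathbb{L}_1^1$ norm, this yields $f \in L^\infty([0,T]:\mathbb{L}_1^1)$. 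Continuity in time in $\mathbb{L}_1^1$ then follows from the integral formulation $f(t)-f(s) = \int_s^t \mathcal{Q}[f(\tau)]\,d\tau$ and the moment estimate applied to $\mathcal{Q}[f(\tau)]$.

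For the conservation laws, I would invoke the symmetrized weak formulation \eqref{WeakForm} with truncated test functions $\varphi_R(p) = \varphi(p)\chi(|p|/R)$, where $\varphi \in \{1, p^i, \omega\}$ and $\chi$ is a smooth cutoff equal to $1$ on $[0,1]$ and $0$ outside $[0,2]$. For each fixed $R$, $\varphi_R \in C_c(\mathbb{R}^3)$, so \eqref{WeakForm} is rigorous and gives
\begin{equation*}
\frac{d}{dt}\int f \varphi_R\,dp \ = \ \int \delta\,\delta \, f f_1 (f_2+f_3)\,\bigl[\varphi_{R,2} + \varphi_{R,3} - \varphi_R - \varphi_{R,1}\bigr]\,dp\,dp_1\,dp_2\,dp_3.
\end{equation*}
As $R \to \infty$, the bracket converges pointwise to zero on the resonant manifold (since $1$, $p^i$, $\omega$ are collision invariants), while the full integrand admits an $R$-independent integrable majorant furnished by Step~1 together with $f \in L^\infty_t \mathbb{L}_1^1$. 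Dominated convergence then makes the right-hand side vanish, and the left-hand side converges to $\frac{d}{dt} \int f \varphi\,dp$, which yields \eqref{ConservationLaws2}.

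The main obstacle is the moment estimate of Step 1: the weight $1 + \omega(p)$ does not factor naturally across the four momenta entering the collision operator, and one must systematically exploit the resonance identity $\omega + \omega_1 = \omega_2 + \omega_3$ (together with Assumption \ref{Assump1} on $\Omega$) to redistribute it before applying the trilinear Radon-transform bounds of Sections \ref{Sec:T1}--\ref{Sec:T3} and \ref{Proof4}. A related secondary subtlety in Step 3 is producing an integrable majorant that is genuinely independent of the truncation parameter $R$, which again reduces to the same moment bound.
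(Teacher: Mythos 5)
Your proposal is correct and follows essentially the same route as the paper, which itself only sketches the argument by invoking the approximation scheme from part (ii) of Theorems \ref{theorem:main}--\ref{theorem:main2} together with the weak formulation \eqref{WeakForm}: propagate the $\mathbb{L}^1_1$ bound along the iterates via a moment estimate obtained by redistributing the weight through $\omega+\omega_1=\omega_2+\omega_3$ and the bilinear ($\mathcal{Q}_1$, $\mathcal{Q}_2$, $Q_2$, $Q_3$) bounds, then pass to the limit and justify the conservation laws. Your treatment is in fact more careful than the paper's on the two points it glosses over --- the truncation $\varphi_R$ needed because $1,p^i,\omega$ are not compactly supported, and the $R$-independent integrable majorant, both of which indeed reduce to the same weight-redistribution estimate you state in Step 1.
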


\bigskip

{\bf Acknowledgment:} PG was supported by the National Science Foundation grant DMS-1301380. ADI was supported in part by NSF grant DMS-1600028
and by NSF-FRG grant DMS-1463753. MBT was supported by  NSF Grant DMS (Ki-Net) 1107291, ERC Advanced Grant DYCON.
 The authors would like to thank Sergey Nazarenko and Alan Newell for fruitful discussions on the topic.

\bibliographystyle{plain}

\bibliography{QuantumBoltzmann}

\end{document}